\documentclass[12 pt,a4paper]{article}

\usepackage{graphicx}
\usepackage{amsmath,hyperref}
\usepackage{amsfonts}
\usepackage[utf8]{inputenc}
\usepackage{xcolor}
\usepackage{bbm}
\usepackage{mathtools}
\usepackage{amssymb}
\usepackage{MnSymbol}
\usepackage{url}
\usepackage{wrapfig}
\usepackage[english]{babel}
\usepackage[linesnumbered, ruled, vlined]{algorithm2e}
\usepackage{rotating}
\usepackage{multirow}

\usepackage{hyperref}
\usepackage{amsthm}
\usepackage{authblk}
\usepackage[margin=25mm]{geometry}


\makeatletter
\newcommand{\github}[1]{%
  \href{#1}{\faGithubSquare}%
}

\DeclareMathOperator*{\argmin}{argmin}
\DeclareMathOperator*{\argminb}{\mathbf{argmin}}

\newcommand{\del}{\partial}
\newcommand{\Hilb}{\mathbb H}
\newcommand{\defeq}{\vcentcolon=}
\newcommand{\Min}[1]{\min_{#1} f}
\newcommand{\aMin}[1]{\argmin_{#1} f}
\newcommand{\aMinb}[1]{\argminb_{#1} f}
\DeclareMathOperator*{\relint}{relint}
\DeclareMathOperator*{\aff}{aff}
\newcommand{\dotcup}{\ensuremath{\mathaccent\cdot\cup}}

\newcommand{\halfSp}{\mathcal{H}}
\newcommand{\comp}{^c}
\newcommand{\interior}[1]{\text{int}(#1)}
\newcommand{\ip}[2]{\langle #1, #2 \rangle}
\newcommand{\lineSeg}[2]{\overlinesegment{#1, #2}}
\newcommand{\funcp}[4]{#1 \, : \, #2 \xrightarrow {#4} #3}
\newcommand{\func}[3]{#1 \, : \, #2 \to #3}

\DeclareMathOperator{\codim} {codim}

\newtheorem{theorem}[equation]{Theorem}
\newtheorem{proposition}[equation]{Proposition}
\newtheorem{lemma}[equation]{Lemma}
\newtheorem{corollary}[equation]{Corollary}
\theoremstyle{definition}
\newtheorem{definition}[equation]{Definition}
\theoremstyle{remark}
\newtheorem{remark}[equation]{Remark}
\newtheorem{example}[equation]{Example}

\numberwithin{equation}{section}

\providecommand{\keywords}[1]
{
  \small	
  \textbf{\textit{Keywords---}} #1
}

\title{A Parallel Linear-Constraint Active Set Method}

\author{E. Dov Neimand
\thanks{eneimand@stevens.edu \\ An earlier version of this paper has been accepted for publication in “Data Science and Optimization” edited and compiled by Boris Goldengorin and Sergei Kuznetsov. The version submitted here has been modified with the intent of making it accessible to a larger audience and expanded to include numerical results and the closed form expression.
\\The work of the authors is based upon research supported by the National Science Foundation under the CAREER Award Number 1653756.} }
 \author{\c{S}erban Sab\u{a}u}
\affil{Department of Electrical and Computer Engineering, \\Stevens Institute of Technology}

\date{}

\begin{document}

\maketitle
\abstract{
Given a linear-inequality-constrained convex minimization problem in a Hilbert space, we develop a novel closed-form binary test that examines sets of constraints and passes only active-constraint sets. The test employs a black-box linear-equality-constrained convex minimization method, but can often fast fail, i.e., without calling the black-box method, by considering information from previous applications of the test on subsets of the current constraint set. The black-box method is used only when the test doesn't fast fail. In either case the test generates the optimal point over the subject inequalities. Iterative and largely parallel applications of the test over growing subsets of inequality constraints yields a minimization algorithm. We include an adaptation of the algorithm for a non-convex polyhedron in Euclidean space. Complexity is not a function of accuracy.  The algorithm does not require the feasible space to have a non-empty interior, or even to be nonempty. Given a polynomial number of processors, the multi-threaded complexity of the algorithm is constant as a function of the number of inequalities.
}

\vspace{.5cm}

\keywords{Convex Optimization, Non-Convex Polyhedron, Hilbert space, Strict Convexity, Parallel Optimization}\\

[MSC Classification] {49M05, 49M37}

\section{Introduction} 
\label{sec::intro}
For years, interior point methods have dominated the field of linear constrained convex minimization \cite{polik2010interior, wright1997primal}.  These methods, though powerful, often exhibit three disadvantages. First, many interior point methods do not lend themselves to parallel implementations without imposing additional criteria. Second, they often require that the feasible space be nonempty, \cite{Tits}, or even a starting feasible point, and if one is unavailable, use a second optimization problem, Phase I Method \cite{boyd2004convex}. Third, they typically terminate when they are within an $\epsilon > 0$ distance of the true optimal point, rendering their complexity a function of their accuracy \cite{frank1956algorithm, boyd2004convex}. 

Recent work on parallel interior-point methods taking advantage of developments in sparse matrices was performed by Juraj Kardoš et al \cite{de2022sparse}. In finite dimensions, Yurtsever et al. \cite{universalyurtsever2015} present a universal fast, non-asymptotic, linear-inequality, convex minimization method. Their method exemplifies the prevailing need of non-empty feasible spaces, asymptotic convergence, work on non-convex polyhedra, and work in Hilbert spaces.  Neculai Andrei presents a comprehensive review of the field in \cite{recentReview}. 

Here we introduce a linear-inequality-constrained convex minimization method that reduces these drawbacks. Our method can offer superior performance to state-of-the-art methods when the number of processors is polynomial as a function of the number of constraints in Euclidean space. When this is not the case, though computationally more complex, our method's simple implementation, non-asymptotic convergence, and broad applicability offer considerable value.

Diamond et al. \cite{diamond2018general} discuss how efforts to minimize a convex objective function over a non-convex polyhedra face a choice between slower accurate methods, those with global solutions, and heuristic algorithms that offer a local optimum or pseudo optimal points that may or may not be in the feasible space. We present a second algorithm, modified from the first that optimizes over non-convex polyhedra. The method does not compromise on accuracy and has similar complexity to the convex method. Our non-convex method takes advantage of information about the non-convex polyhedron's faces to improve performance over the convex algorithm.

For a simple brute force approach to the three problems facing standard interior point methods, \cite{NaiveProjection} presents a progenitor to Algorithm \ref{algo:opt}, in finding the projection, $\Pi_P(y)$, of a point, $y\in \mathbb R^n$, onto a convex polyhedron, $P\subset \mathbb R^n$. Their algorithm first checks if $y \in P$, and if it is not, considers each subset of $P$'s defining inequality constraints, as equality constraints.  Projections onto these sets of equality constraints are easily found. A filter removes the affine projections that are outside $P$, and of those that remain, the closest to $y$ is $\Pi_P(y)$.

In expanding from polyhedral projections in $\mathbb R^n$ to a generic convex objective function in a Hilbert space, our algorithm makes use of a black-box linear-equality constrained convex minimization method for our objective function $\func{f}{\Hilb}{\mathbb R}$. Textbooks and papers on unconstrained minimization in Hilbert spaces are now ubiquitous, \cite{bauschke2011convex, balakrishnan2012introduction, debnath2005introduction} provide examples. Recently \cite{gasnikov2017convex} and \cite{okelo2019convex} presented unconstrained minimization methods atop the plethora of preceding research. Given a set of linear-equality constraints, Boyd et al. \cite{boyd2004convex}, suggests eliminating the linear equality constraints with a change in variable, reducing the problem to unconstrained minimization in fewer dimensions. Reliance on our black-box method is well-founded.

Unconstrained convex functions can often be optimized quickly. Some functions, like projection functions  can be optimized in $O(n^3)$ operations  over an affine space in $\mathbb R^n$, Plesnik \cite{asProj}. Note that there is no $\epsilon >0$ term in the complexity.

Our algorithm employs a test that, together with the black box method, reviews a set of linear inequality constraints, $L$. The test passes $L$ only if the black-box method optimal point is the optimal point over all $L$. Necessary criteria often allow for the test to fast fail $L$ without using the black-box method, instead looking back at previous applications of the test on subsets of $L$ that have one less inequality than $L$. This fast fail, as a function of the number of dimensions, has quadratic sequential complexity, and can be completely multi-threaded down to constant complexity.  When the test doesn't fast fail, it resorts to calling the black-box method on the inequality turned equality constraints in $L$.  In both cases the test generates the optimal point of $f$ over $L$.

Applying the test repeatedly and in parallel over growing sets of inequality constraints yields Algorithm \ref{algo:opt}, which returns $\aMin {L}$.  

Unlike \cite{NaiveProjection}, which computes projections on all faces of a polyhedron to determine the optimal point, Algorithm \ref{algo:opt} ceases its search as soon as the face containing the optimal point is reviewed.  

Our algorithm does not use an iterative minimization sequence and therefore preserves valuable properties of the underlying unconstrained minimization method. When $\aMin \Hilb$ finds an exact answer without the need for an iteration arriving within an $\epsilon$ distance of the optimal point, so too does our algorithm. 

Because of the finite number of operations required to compute the projection onto an arbitrary affine space, our methods excel as a projection function.  Recently, Rutkowski, \cite{rutkowski}, made progress with non-asymptotic parallel projections in a Hilbert space.  Where the number of inequality constraints is $r$, we figure the complexity of their algorithm to be $O(2^{r-1} r^3)$ before parallelization, and $O(r^3)$ over $2^{r-1}$ processors.  Our method compares favorably with theirs as a function of the number of constraints.

{\it Contributions of the Paper:} Our closed-form methods have distributed complexity. We eliminate such common assumptions as the need for nonempty feasible spaces, a starting feasible point, and a nonempty interior. We develop polyhedral properties to construct easy-to-check, necessary conditions that enable skipping many of the affine spaces that impede forebears. These methodological improvements will likely lead to the common usage of both our convex algorithm on systems capable of large scale multi-threading and our non-convex algorithm even when a small amount of multi threading is available and an accurate result is required.

For a quick look at our algorithm's complexity, let our objective function be the projection function with $r \in \mathbb N$ inequality constraints. If $r >> n$, the complexity comes out to  $O(r^{n+1}n^4)$.  This complexity result is weaker than the polynomial time of interior point methods reviewed by Polik et al.   \cite{polik2010interior}, however when a large number of threads are available to process the problem in parallel, the time complexity of the algorithm is $O(n^4)$, constant as a function of the number of inequalities. 

In Section \ref{sec:definitions}, we introduce prerequisite definitions, then present a closed form recursive expression for the minimum arguments of a convex function over linear-inequality constraints. Finally, we present the algorithm and examples. In Section \ref{sec:proofs}, we prove the algorithm works, and state and prove its complexity. In Section \ref{sec:non-convex}, we expand our work to minimization over non-convex polyhedra and present Algorithm \ref{algo:non-convex}, an adaptation of Algorithm \ref{algo:opt}. 
In Section \ref{section::numerical results}, we present the results of numerical experimentation.

\section{The Algorithm}
\label{sec:algo}
\label{sec:definitions}

\subsection{Definitions}
We present several prerequisite definitions before proceeding to our algorithm. In the following definitions, we forgo common matrix notation because it is unsuitable for an infinite dimensional Hilbert space.

\begin{definition}
\label{def:half-space}
 In \cite{boyd2004convex} a \textbf{convex polyhedron} is defined as the intersection of a finite number of linear inequalities. We use the following notation: Let $P$ be a convex polyhedron and $\halfSp_P$ a finite collection of $r \in \mathbb N$ closed half-spaces in $\Hilb$, an $n \in \mathbb N \cup \{\infty\}$ dimensional Hilbert space, so that $P = \bigcap \halfSp_P$. For all $H \in \halfSp_P$ we define the boundary hyperplane $\del H$, the vector $\mathbf{n}_H \in \Hilb$ normal to $\del H$, and $b_H \in \mathbb R$ such that $H = \{\mathbf{x} \in \Hilb \mid \ip{\mathbf{x}}{\mathbf{n}_H} \leq b_H\}$. For any $H \in \halfSp_P$ we say that $H$ is a half-space of $P$ and $\del H$ a hyperplane of $P$. 
 \end{definition}

We use the term polyhedron to refer to convex polyhedra.  For the non-convex polyhedra we address in section \ref{sec:non-convex}, we state their non-convexity explicitly.

\begin{example}
\label{exmp:A Polyhedron}
Examples of polyhedra include $\Hilb, \emptyset , \{42\}$, a rectangle, and a set we'll call the \lq A' polyhedron, a simple unbounded example we will later use to illustrate more complex ideas. \lq A'$ \defeq \{(\mathbf{x},\mathbf{y}) \in \mathbb R^2 \mid \mathbf{y} \leq \frac{1}{2} \text{ and } \mathbf{x}+\mathbf{y} \leq 1 \text{ and } -\mathbf{x}+\mathbf{y} \leq 1\}$.  We have $\halfSp_{\text{\lq A'}} = \{\bar F, \grave G, \acute H\}$ with $\bar F \defeq \{(\mathbf{x},\mathbf{y})\in \mathbb R^2\mid \mathbf{y} \leq \frac{1}{2}\}$, $\grave G \defeq \{(\mathbf{x},\mathbf{y})\in \mathbb R^2 \mid \mathbf{x} + \mathbf{y} \leq 1\}$, and $\acute H \defeq \{(\mathbf{x},\mathbf{y})\in \mathbb R^2 \mid -\mathbf{x} + \mathbf{y} \leq 1\}$.  Both the name of the \lq A' polyhedron and the half-space accents were selected for their iconicity to avoid confusion when we return to this example.
\end{example}

Below, we use $\func{f}{\Hilb}{\mathbb R}$ for an arbitrary convex objective function constrained by an arbitrary polyhedron, $P$. The minimization algorithm below finds the set $\aMin P$.  We use bold $\aMinb A$ to indicate that we can use the black-box method to find the minimum arguments on the affine space $A$, and $\aMin P$ when the black box method cannot be called on the argument set.

\begin{example}
\label{exmp:proj}
Given some $y\in \Hilb$, let $f_y(\mathbf{x}) = \|\mathbf{x} - \mathbf{y}\|$. We consider the projection problem $\Pi_P(\mathbf{y}) \defeq {\arg \min_P f_y}$.  Here, $f$ is strictly convex and the optimal set $\aMin P$ will always have a unique value, \cite{boyd2004convex}.
\end{example}

\begin{definition}
\label{def:affine space}
 We say $A$ is an \textbf{affine space of} $P$ if it is a nonempty intersection of a subset of $P$'s hyperplanes. We will denote the set of $P$\textbf{'s affine spaces} with $\mathcal A_P \defeq \{\bigcap_{H \in \eta} \del H \mid \eta \subseteq \halfSp_P\} \setminus \{ \emptyset \}$. Note that the cardinality $\vert \mathcal A_P \vert \leq \sum_{i=1}^{n} \binom {r}{i} \leq \min(r^n, 2^r)$ elements since the intersection of more than $n$ distinct hyperplanes will be an empty set, or redundant with an intersection of fewer hyperplanes.
\end{definition}

\begin{example}
If $\halfSp_P = \{F, G, H\}$ then $\mathcal A_P = \{\Hilb, \del H, \del G, \del F, \del H \cap \del G, \del H \cap \del F, \del F \cap \del G, \del H \cap \del G \cap \del F\}$. If $P \subset \mathbb R^3$, $\del H$ might be a plane, $\del H \cap \del G$ a line, and $\del H \cap \del G \cap \del F$ a single point. However, if any of those intersections are empty then they are not included in $\mathcal A_P$. We have $\Hilb \in \mathcal A_P$, since if $\eta = \emptyset $, then for all $\mathbf{x} \in \Hilb$, we trivially have $\mathbf{x} \in H$ for all $H \in \eta$ giving $\mathbf{x} \in \bigcap_{H \in \emptyset}H = \Hilb$.

\end{example}

\begin{example}
\label{exmp: A Affine}
Consider the \lq A' polyhedron from Example \ref{exmp:A Polyhedron}. It's worth noting that \lq A' has an affine space, the point $\del \grave G \cap \del \acute H$, that is disjoint with \lq A'. The affine space that is a point at the top of the \lq A' is outside of our polyhedron, but still a member of $\mathcal A_{\text{\lq A'}}$.  This is a frequent occurrence.
\end{example}

\begin{definition} \label{def:P cone}
For $A \in \mathcal A_{P}$, we define the $P$-\textbf{cone} of $A$ as  $P_{A} \defeq \bigcap \{H \in \halfSp_P \mid \del H \supseteq A\}$, the intersection of the half spaces whose boundaries intersect to make $A$.
\end{definition}

\begin{example}
We have $\Hilb \in \mathcal A_P$, so it is appropriate to note that for a polyhedron, $P$ we have $P_\Hilb = \Hilb$.
\end{example}

\begin{example}
In the \lq A' polyhedron (\ref{exmp:A Polyhedron}), the \lq A'-cone of the top point \lq A'$_{\del \acute H \cap \del \grave G} = \acute H \cap \grave G$. Note that $\bar F \cap \grave G \cap \acute H = \text{\lq A'} \subset \text{\lq A'}_{\del \bar F \cap \del \grave G}$.
\end{example}

\begin{definition}
\label{def:imidiate}
For $A,B \in \mathcal A_P$, we say that $B$ is an \textbf{immediate superspace} of $A$ if $B \supsetneq A$ and there exists an $H \in \halfSp_P$ such that $A = \del H \cap B$. We will also say that $A$ is an \textbf{immediate subspace} of $B$.  We will denote the set of all of $A$'s superspaces with $\mathcal B_A$.
\end{definition}

\begin{example}
In the \lq A' example (\ref{exmp:A Polyhedron}).  The immediate superspaces of $\del \grave G \cap \del \acute H$ are $\del \grave G$ and $\del \acute H$.  The immediate superspace of $\del \bar F$ is $\mathbb R^2$.  Observe that if an $A \in \mathcal A_P$ has co-dimension $i$, then its immediate superspaces have co-dimensions $i-1$.
\end{example}

\subsection{A Closed Form Expression}
\label{expr:closed form}

For $A \in \mathcal A_P$ we can recursively calculate the minimum argument for a $P$-cone using the black box-method as follows:

\begin{equation}
\label{eq: recursive}
  \aMin{P_A} =
    \begin{cases}
      P_A \cap \aMin {P_B} & \exists B \in  \mathcal B_A \text{ s.t.} \aMin {P_B} \cap P_A \ne \emptyset\\
      \aMinb A & \text{otherwise}
    \end{cases}       
\end{equation}

For the base recursive case, the final immediate superspace, $\Hilb$, we note that $\aMin {P_\Hilb} = \aMinb \Hilb$.  We prove correctness in Remark \ref{remark:end of proof} below.
\\\\
If for all $B \in \mathcal B_A$ we have $\aMin {P_B} \cap P_A = \emptyset$, which can be computed with (\ref{eq: recursive}), and $\aMinb A \cap P \ne \emptyset$. Then:

\begin{equation}
    \label{eq: closed form}
    \aMin P = P \cap \aMinb A 
\end{equation}

Noting $\mathcal A_P$ has a finite number of elements, the above expression is closed form when the black-box method is closed form. In Corollary \ref{cor:closed form} below, we prove that such an $A$ exists and the correctness of the expression. 

\subsection{The Algorithm}
Our closed form expression motivates Algorithm \ref{algo:opt} which tests each affine space for the conditions of Equation (\ref{eq: closed form}) on Line \ref{algo.line: if else} and \ref{algo.line:sufficient} to find the optimal point of $f$ in $P$. The algorithm uses Equation (\ref{eq: recursive}) and the black-box method to generate the optimal points over the polyhedral cones saving them as $m_A$. In Theorem \ref{thm:convex algo works} below, we guarantee that the algorithm returns $\aMin P$.


\begin{algorithm}[H]
\label{algo:opt}\label{algo:test}
\DontPrintSemicolon
\KwIn{A set of half-spaces $\halfSp_P$ and a function  $\funcp{f}{\Hilb}{\mathbb R}{conv.}$}
\KwOut{$\aMin P$}

\For{$i \gets 0$ \KwTo $\min(n,r)$ \label{algo.line: codim loop}}{ 
    \ForEach{$A \in \mathcal A_P$ {\upshape with} $\codim(A) = i$ \bf{in parallel} \label{algo.line: big parallel}}{
        \lIf{$\exists B \in \mathcal B_A$ {\upshape s.t}. $m_B \cap P_A \neq \emptyset$ }{\label{algo.line: if else}
            $m_A \gets m_B \cap P_A$ \label{Algo.line:m_A gets m_B cap P_A}
        }\Else{ \label{algo.line:else}
            $m_A \gets \aMinb A$ is computed and saved.\label{algo.line:m_A gets aMin A}\\
            \lIf{$m_A \cap P \neq \emptyset$}{\label{algo.line:sufficient}
                \Return $m_A \cap P$ \label{Algo.line:non empty return}
            }
        }
        
    }
}
\Return $\aMin P$ is empty.

\caption{Finds $\aMin P$.}

\end{algorithm}

\subsection{Examples Running the Algorithm}
In the introduction we described the use of a test to determine if an affine space $A \in \mathcal A_P$ is the active set of constraints.  We want to know if $\Min P = \Min A$; whether such an $A$ even exists, and if so, how to recognize it. 

In Section \ref{sec:proofs}, we prove our answers to the questions above. Such an $A$, where $\Min P = \Min A$, does exist. The test that recognizes that $A$, is on lines \ref{algo.line: if else} and \ref{algo.line:sufficient}. Here, we provide examples working through our algorithm. 

\begin{example}
\label{exmp: intuitive}
We will optimize some strictly-convex objective function $f$ over a polyhedron, $P \subseteq \mathbb R^3$, with a typical vertex, $A$.

When we say that $A$ is a typical vertex, we mean that it's the unique intersection point of three planes. That lets us build $P_A$, a polyhedral cone, as the intersection of the three plane's half spaces.

On Line \ref{algo.line: if else}, the test first looks at all the immediate superspaces of $A$.  We find each of these by removing one of the three planes.  Each of $A$'s three immediate superspaces is the intersection of two planes. These lines are the edges of the cone that is $P_A$, and they intersect at $A$. We'll call these lines $B, C$ and $D$.  Each line has its own $P$-cone, $P_B,P_C$ and $P_D$.  These cones are all the intersections of two of $P_A$'s three half spaces. 

By the time we arrive at the test for $A$, the algorithm has already computed the optimal points for each of the cones.  Those optimal points were stored respectively as $m_B, m_C$ and $m_D$. Still on Line \ref{algo.line: if else}, the test checks if any of those points are in $P_A$.  If so, then $A$ is not the saught-after active constraint set. This is the fast fail since we don't need to compute $\aMinb A$.  Suppose, without loss of generality, the test found that $m_C\in P_A$. A helpful consequence of the fast fail is that we now know that $m_C$ is the optimal point of $P_A$.  That is, $m_A \gets m_C$, which would be useful information if there were more dimensions.

If all $m_B, m_C$ and $m_D$ are outside of $P_A$, we progress to the \textbf{else} statement on Line \ref{algo.line:else} with the knowledge that $\Min{P_A} = \Min A$.  Only now is the black-box method used to compute $\aMinb A$. We save that computation as $m_A$ for future use.

A final step remains.  We've verified that $m_B, m_C, m_D \in P_A\comp$, and computed $m_A$. If $m_A \in P$, then $m_A$ is the optimal point over $P$ and the algorithm concludes.  If it's not, we move on to apply the test to some other affine space of $P$.

By checking the affine spaces in order of co-dimension, we ensure that we've already done the work on immediate superspaces to set the test up for success.
\end{example}

There are lots of \textit{why} questions to be asked about Example \ref{exmp: intuitive}.  Section \ref{sec:proofs} should answer those questions.  

You'll find a complete and detailed run through of Algorithm \ref{algo:opt} in Example \ref{example:run through algorithm}.

\begin{example}
\label{example:run through algorithm}
We will revisit Example \ref{exmp:A Polyhedron} and \ref{exmp:proj} by calculating $\Pi_{\text{\lq A'}}(1,1)$ with Algorithm \ref{algo:opt}.  Refer to Figure \ref{fig:'A'} throughout this example for your convenience.

We begin Line \ref{algo.line: codim loop} with $i \gets 0$, setting us up to consider on Line \ref{algo.line: big parallel} all the affine spaces in $\mathcal A_P$ with co-dimension 0. The only such affine space is $\Hilb$, so $A \gets \Hilb$. On Line \ref{algo.line: if else}, we note that $\Hilb$ has no immediate superspaces, so $\mathcal B_\Hilb = \emptyset$, and the condition in the \textbf{if}, statement is false. We proceed to the \textbf{else} statement and compute $m_\Hilb \gets \Pi_{\Hilb}(1,1) = (1,1)$. We now check the condition on Line \ref{algo.line:sufficient} and find $m_\Hilb = (1,1) \in P\comp$. The condition is false. The inner loop completes an iteration, and with no more affine spaces of co-dimension 0, the inner loop concludes.  The outer loop on Line \ref{algo.line: codim loop} progresses to $i \gets 1$, to look at all of $P$'s affine spaces of co-dimension 1 on Line \ref{algo.line: big parallel}.

There are three affine spaces of co-dimension 1, $\del \acute H$, $\del \grave G$, and $\del \bar F$.  Each affine space of co-dimension 1 has the same set of immediate  superspaces, $\mathcal B_{\del \acute H} = \mathcal B_{\del \grave G}=\mathcal B_{\del \bar F}=\{\Hilb\}$.

On Line \ref{algo.line: big parallel}, we will arbitrarily look at $A \gets \del \acute H$ first, though ideally all three affine spaces would be considered in parallel.  On Line \ref{algo.line: if else}, we review every $B \in \mathcal B_{\del \acute H} = \{ \Hilb \}$ to check if $m_B \in P_{\del \acute H} = \acute H$. Only $m_\Hilb = (1,1)$ is considered.  Is $(1,1) \in \acute H$? Yes, $-1 + 1 \le 1$. The condition on Line \ref{algo.line: if else} is true.  We proceed to the \textbf{then} statement on Line \ref{Algo.line:m_A gets m_B cap P_A} and assign $m_{\del \acute H} \gets (1,1)$. Completing the inner loop iteration for $\acute H$, we move onto $A \gets \del \grave G$ and $A \gets \del \bar F$.

For both $A \gets \del \grave G$ and $A \gets\del \bar F$, on Line \ref{algo.line: if else} we have  $m_B$ as $(1,1)$. We check the condition on Line \ref{algo.line: if else}. Is $m_B = (1,1) \in F$?  Is it in $G$? No.  Both $A$ as $\del \bar F$ and as $\del \grave G$ go to the \textbf{else} statement where we compute $m_{\del \bar F} = \Pi_{\del \bar F}(1,1) = (1, \frac{1}{2})$ and $m_{\del \grave G} = \Pi_{\del \grave G}(1,1) = (\frac{1}{2},\frac{1}{2})$.  On Line \ref{algo.line:sufficient}, we check $m_{\grave G}$ and $m_{\bar F}$ for membership in $P$, and different things happen to them. The point $(1, \frac{1}{2}) \in P\comp$, but the point $(\frac{1}{2},\frac{1}{2}) \in P$, taking $A$ as $\del \grave G$ to the \textbf{return} statement on Line \ref{Algo.line:non empty return}.  We conclude $\Pi_{\text{\lq A'}}(1,1) = (\frac{1}{2},\frac{1}{2})$.

Note that if both conditions on Line \ref{algo.line:sufficient} had turned out false, we would now know $m_{\bar F}, m_{\acute H}$, and $m_{\grave G}$, preparing us for the next iteration of the outer loop where we would consider affine spaces of co-dimension $i \gets 2$.

\begin{figure}[htp]
    \centering
    \includegraphics[width=0.45\textwidth]{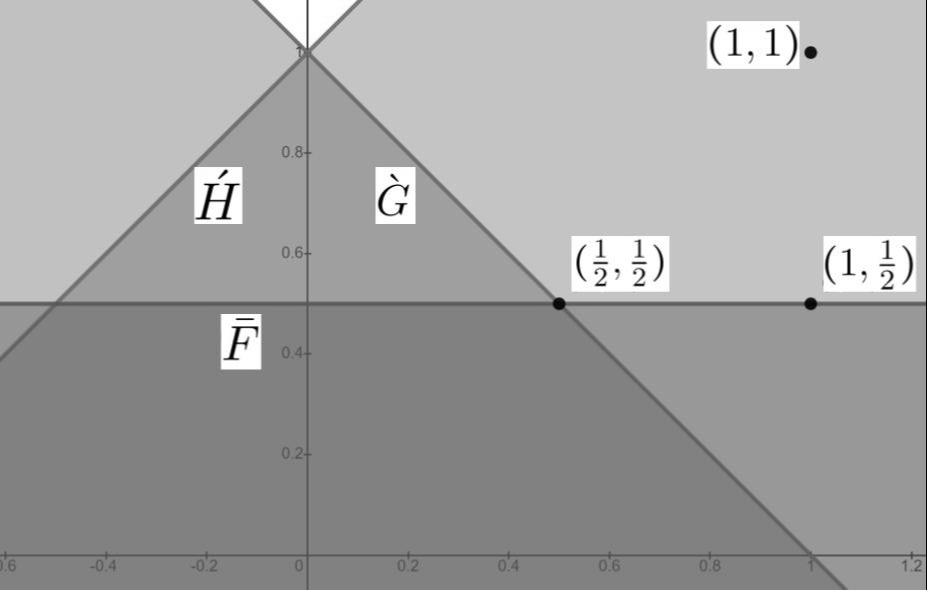}
    \caption{Example \ref{example:run through algorithm}}
    \label{fig:'A'}
\end{figure}

\end{example}

\begin{remark}
Below, in theorem \ref{thm:final complexity} we present the complexity of Algorithm \ref{algo:opt}.  If the Hilbert space is finite dimensional, uses the standard inner product, $r >> n$, and the black-box method takes $M(n)$ operations, then the complexity of the algorithm is $O(r^n \cdot (r \cdot n + M(n)))$ when run sequentially, and $O(n (n + M(n)))$ when run in parallel.

\end{remark}

 \section {Polyhedral Proofs}
 \label{sec:proofs} 
 In this section we will prove the correctness of the algorithm and the closed form statement.  We develop necessary and sufficient conditions to find an affine-space, $A$, that has $\Min A = \Min P$ and guarantee $A$'s existence for the case when $\aMin P \neq \emptyset$. While The Sufficient Criteria (\ref{prop:sufficient}) require the computation $\aMinb A$, The Necessary Criteria (\ref{def:necesarry criteria}) do not. This significantly reduces the number of affine spaces over which the black-box method calculates $\aMinb A$.

 
 \subsection{Preliminary Proofs}
 
 \begin{definition} \label{def:line}
 For $\mathbf{a},\mathbf{b} \in \Hilb$, we use $\lineSeg{\mathbf{a}}{\mathbf{b}}$
to denote the closed \textbf{line segment} from $\mathbf{a}$ to $\mathbf{b}$ and $\overline{\mathbf{a},\mathbf{b}}$
to denote the \textbf{line} containing $\mathbf{a}$ and $\mathbf{b}$.
\end{definition}
 
We include Lemma \ref{lem:plane line} and  \ref{lem:line} for the reader's convenience.  
They are proved in Neimand \cite{Neimand}.

\begin{lemma}
\label{lem:plane line}
Let $\mathbf{a},\mathbf{b} \in \Hilb$. If $H$ is a half-space such that $\mathbf{a} \in H$ and $\mathbf{b} \in H\comp$, then $\del H \cap \lineSeg{\mathbf{a}}{\mathbf{b}}$ has exactly one point. 
\end{lemma}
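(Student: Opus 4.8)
The plan is to collapse the statement to a one-variable problem by parametrizing the segment. Write every point of $\lineSeg{\textbf{a}}{\textbf{b}}$ as $\textbf{x}(t) \defeq (1-t)\textbf{a} + t\textbf{b}$ for $t \in [0,1]$, and consider the scalar function $g(t) \defeq \ip{\textbf{x}(t)}{\textbf{n}_H} = (1-t)\ip{\textbf{a}}{\textbf{n}_H} + t\ip{\textbf{b}}{\textbf{n}_H}$. By linearity of the inner product in its first argument this is an affine function of $t$ with slope $\ip{\textbf{b}}{\textbf{n}_H} - \ip{\textbf{a}}{\textbf{n}_H}$, and, recalling $\del H = \mySet{\textbf{x} \in \Hilb | \ip{\textbf{x}}{\textbf{n}_H} = b_H}$, we have $\textbf{x}(t) \in \del H$ precisely when $g(t) = b_H$. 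So counting points of $\del H \cap \lineSeg{\textbf{a}}{\textbf{b}}$ amounts to counting solutions $t \in [0,1]$ of $g(t) = b_H$.

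Next I would record the two sign conditions coming from the hypotheses: $\textbf{a} \in H$ gives $g(0) = \ip{\textbf{a}}{\textbf{n}_H} \leq b_H$, while $\textbf{b} \in H\comp$ gives $g(1) = \ip{\textbf{b}}{\textbf{n}_H} > b_H$. In particular the slope $g(1) - g(0) > 0$ is strictly positive, so $g$ is a strictly increasing affine function, and it therefore attains the value $b_H$ at exactly one real number, namely $t^\star \defeq \frac{b_H - g(0)}{g(1) - g(0)}$. The inequalities $g(0) \leq b_H < g(1)$ force $t^\star \in [0,1)$, so $\textbf{x}(t^\star)$ is a genuine point of the segment lying on $\del H$ (existence), and the strict monotonicity of $g$ rules out any second such $t$ in $[0,1]$ (uniqueness).

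There is no real obstacle here; the only subtlety worth flagging in the write-up is the role of the strict inequality in $\textbf{b} \in H\comp$. It is exactly this strictness that makes the slope of $g$ nonzero, and hence yields a single crossing point rather than a whole sub-segment: if one only assumed $\textbf{a},\textbf{b}$ were separated in a weak sense one could have $g \equiv b_H$ and the conclusion would collapse. Thus the proof should make explicit that the open half-space $H\comp$ is where the hypothesis is used. This lemma is then the geometric fact that will later let us detect which hyperplane of $P$ a step of the algorithm first exits through.
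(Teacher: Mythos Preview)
Your proof is correct and follows essentially the same approach as the paper: both parametrize the segment and study the affine scalar function $g(t)=\ip{(1-t)\textbf{a}+t\textbf{b}}{\textbf{n}_H}$ against the level $b_H$. The only cosmetic difference is in the uniqueness step: the paper argues that two intersection points would force $\overline{\textbf{a},\textbf{b}}\subset\del H$ (contradicting $\textbf{b}\in H\comp$), whereas you use the strict monotonicity of $g$ directly; both are equivalent one-line observations.
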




\begin{lemma} \label{lem:line}

Let $\mathbf{a}, \mathbf{b},$ and $\mathbf{c}$ be distinct points in $\Hilb$ with $\mathbf{b} \in \lineSeg{\mathbf{a}}{\mathbf{c}}$ . 
\begin{enumerate}
    \item $\|\mathbf{a}-\mathbf{b}\| + \|\mathbf{b}-\mathbf{c}\| = \|\mathbf{a}-\mathbf{c}\|$ 
    \item $\|\mathbf{a}-\mathbf{b}\| < \|\mathbf{a}-\mathbf{c}\|$.
    \item If $\func{f}{\Hilb}{\mathbb R}$ is convex and $f(\mathbf{a}) < f(\mathbf{c})$ then $f(\mathbf{b}) < f(\mathbf{c})$.
    \item If $\func{f}{\Hilb}{\mathbb R}$ is convex and $f(\mathbf{a}) \leq f(\mathbf{c})$ then $f(\mathbf{b}) \leq f(\mathbf{c})$.
\end{enumerate}
\end{lemma}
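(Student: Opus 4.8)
The plan is to reduce all four statements to the single scalar parameter that places $\textbf{b}$ on the segment. Since $\textbf{b} \in \lineSeg{\textbf{a}}{\textbf{c}}$, write $\textbf{b} = (1-t)\textbf{a} + t\textbf{c}$ for some $t \in [0,1]$; because $\textbf{a}$, $\textbf{b}$, $\textbf{c}$ are pairwise distinct, neither endpoint is attained, so in fact $t \in (0,1)$. This one observation drives everything that follows.

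For parts 1 and 2, I would compute $\textbf{a} - \textbf{b} = t(\textbf{a} - \textbf{c})$ and $\textbf{b} - \textbf{c} = (1-t)(\textbf{a} - \textbf{c})$, whence $\|\textbf{a} - \textbf{b}\| = t\,\|\textbf{a} - \textbf{c}\|$ and $\|\textbf{b} - \textbf{c}\| = (1-t)\,\|\textbf{a} - \textbf{c}\|$ by positive homogeneity of the norm. Summing these gives part 1 immediately. Since $\textbf{a} \neq \textbf{c}$ we have $\|\textbf{a} - \textbf{c}\| > 0$, and since $t < 1$, part 2 follows from $\|\textbf{a} - \textbf{b}\| = t\|\textbf{a} - \textbf{c}\| < \|\textbf{a} - \textbf{c}\|$ (equivalently, subtract the strictly positive quantity $\|\textbf{b} - \textbf{c}\|$ from both sides of the identity in part 1).

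For parts 3 and 4, apply the defining inequality of convexity to the same convex combination: $f(\textbf{b}) \leq (1-t)f(\textbf{a}) + t f(\textbf{c})$. If $f(\textbf{a}) < f(\textbf{c})$, then since $1 - t > 0$ we may bound the first term to get $f(\textbf{b}) < (1-t)f(\textbf{c}) + t f(\textbf{c}) = f(\textbf{c})$, which is part 3; carrying out the identical substitution with $\leq$ replacing $<$ throughout yields part 4.

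There is no genuine obstacle. The only point needing a moment's attention is the passage from $t \in [0,1]$ to $t \in (0,1)$, which is precisely where the hypothesis that the three points are distinct is used, and it is exactly what the strict inequalities in parts 2 and 3 require. Everything else is positive homogeneity of the norm together with the definition of convexity.
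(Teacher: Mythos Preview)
Your argument is correct throughout. For parts 2--4 it coincides with the paper's proof: the paper derives part 2 from part 1 by subtracting the strictly positive $\|\textbf{b}-\textbf{c}\|$, proves part 3 by the same convexity inequality and the observation that $(1-t_b)f(\textbf{a})+t_b f(\textbf{c})$ lies strictly below $f(\textbf{c})$ when $t_b\in(0,1)$, and handles part 4 by splitting into the equality case and invoking part 3 otherwise---all of which your single $\leq$ substitution covers.

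For part 1, however, you take a genuinely shorter route. You compute $\textbf{a}-\textbf{b}=t(\textbf{a}-\textbf{c})$ and $\textbf{b}-\textbf{c}=(1-t)(\textbf{a}-\textbf{c})$ and add the norms directly, using only positive homogeneity. The paper instead argues one inequality $\|\textbf{a}-\textbf{b}\|+\|\textbf{b}-\textbf{c}\|\leq\|\textbf{a}-\textbf{c}\|$ via an algebraic manipulation starting from $\|\textbf{a}+\textbf{c}\|+\|\textbf{a}-\textbf{c}\|\geq 0$, and then closes the gap with the triangle inequality in the reverse direction. Your direct computation avoids that squeeze entirely and is the cleaner of the two; the paper's version gains nothing from the extra step and in fact contains what appears to be a sign/typo issue in its substitution line.
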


\begin{definition}
We use the following notations.
For any $X \subset \Hilb$ we use $\aff(X)$ to denote the \textbf{affine hull} of $X$,
 $B_r(\mathbf{y})$ to denote the open \textbf{ball} centered at $\mathbf{y}\in \Hilb$ with a radius of $r \in \mathbb R$, $\interior X$ for the \textbf{interior} of $X$,
and $\relint(X)$ to denote the \textbf{relative interior} of $X$. 
\end{definition}

 \begin{lemma}
 \label{lem:relitive interior}
 Let $H \in \halfSp_P$, $K \subseteq P$ and $A \in \mathcal A_P$ such that $K$ is a convex set with $\relint(K) \cap \del H \neq \emptyset$, and $A$ is the smallest superset of $K$ with regards to inclusion in $\mathcal A_P$. Then, $A \subseteq \del H$.  \end{lemma}
 \begin{proof}
 Let $H,K$ and $A$ be as described above and $\mathbf{y} \in 
 \del H \cap \relint K$. 
 There exists an $\epsilon > 0$ and neighborhood $N \defeq B_\epsilon(\mathbf{y}) \cap \aff(K)$, such that $N \subseteq K \subseteq P \cap A$.
 
Let us falsely assume $A$ is not a subset of $\del H$.  If $K \subseteq \del H$, then by the definition of $A$, $A \subseteq \del H$ in contradiction to the false assumption we just made. Therefor, $K$ is not a subset of $\del H$ and there exists an $\mathbf{a} \in K \setminus \del H$. Since $K \subseteq P \subseteq H$, it follows that $\mathbf{a} \in \interior H$. 
 
 Let $t_\epsilon \defeq  1 + \frac{\epsilon}{2\|\mathbf{a}-\mathbf{y}\|} \in \mathbb R$ and $\mathbf{y}_\epsilon \defeq (1-t_\epsilon) \mathbf{a} + t_\epsilon \mathbf{y}$.  Observe that $\|\mathbf{y}_\epsilon - \mathbf{y}\| = \|(1-t_\epsilon) \mathbf{a} + t_\epsilon \mathbf{y} - \mathbf{y}\| = \frac{\epsilon}{2\|\mathbf{a}-\mathbf{y}\|} \| \mathbf{a} - \mathbf{y}\| = \frac{\epsilon}{2}$, giving $\mathbf{y}_\epsilon \in B_\epsilon(\mathbf{y}) \cap \overline{\mathbf{a},\mathbf{y}}$.  Note that any line containing two points in an affine space is entirely in that affine space; since $\mathbf{a},\mathbf{y} \in \aff (K)$, we have $\overline{\mathbf{a},\mathbf{y}} \subseteq \aff (K)$. With $\mathbf{y}_\epsilon\in \overline{\mathbf{a},\mathbf{y}}$, we have $\mathbf{y}_\epsilon\in \aff(K)$, and we may conclude $\mathbf{y}_\epsilon\in N$.

 Let $t_y \defeq (\|\mathbf{a}-\mathbf{y}\| + 2^{-1}\epsilon)^{-1}\|\mathbf{a}-\mathbf{y}\|$ between 0 and 1.  From our earlier definition of $\mathbf{y}_\epsilon$, we have $\mathbf{y}_\epsilon = (-2^{-1}\|\mathbf{a}-\mathbf{y}\|^{-1}\epsilon)\mathbf{a}+2^{-1}\|\mathbf{a}-\mathbf{y}\|^{-1}(2\|\mathbf{a}-\mathbf{y}\| + \epsilon)\mathbf{y}$.  By isolating $\mathbf{y}$ and substituting in $t_y$, we get $\mathbf{y} = (1-t_y)\mathbf{a} + t_y \mathbf{y}_\epsilon$, giving $\mathbf{y} \in \overlinesegment{\mathbf{a}, \mathbf{y}_\epsilon}$.
 
  If $\mathbf{y}_\epsilon$ is in $\interior H$, then by convexity of $\interior H$, we have $\lineSeg{\mathbf{a}}{\mathbf{y}_\epsilon} \subset \interior H$, including $\mathbf{y}$, a contradiction to $\mathbf{y} \in \del H$.
 
 If $\mathbf{y}_\epsilon$ is in $\del H$, we have two points of $\overline{\mathbf{a},\mathbf{y}}$, that would be $\mathbf{y}$ and $\mathbf{y}_\epsilon$, in $\del H$.  It follows that $\overline{\mathbf{a},\mathbf{y}} \subseteq \del H$ and $\mathbf{a} \in \del H$, a contradiction.
 
All that remains is for $\mathbf{y}_\epsilon \in H\comp \subseteq P\comp$. But $\mathbf{y}_\epsilon \in N \subseteq P$, a contradiction.
 \end{proof}



\begin{proposition}
\label{prop:epsilon ball}
Let $\mathbf{x} \in \relint{(K)}$ where $K \subseteq P$ is convex, and let $A$ be the smallest superset of $K$ with regards to inclusion in $\mathcal A_P$.  There exists an $\epsilon > 0$ such that $P_A \cap B_{\epsilon}(\mathbf{x}) = P \cap B_\epsilon(\mathbf{x})$.
\end{proposition}
\begin{proof}

If $\halfSp_P = \emptyset$, then $A = \Hilb$, and $P_\Hilb = P = \Hilb$ giving the desired result, so we will assume this is not the case.

Let $\mathbf{x} \in \relint{(K)}$. Let $Q \subseteq \Hilb$ be a polyhedron such that $\halfSp_Q = \halfSp_P \setminus \halfSp_{P_A}$. Then we can define $\epsilon \defeq \min \{ \|\mathbf{y}-\mathbf{x}\| \mid \mathbf{y} \in \del H \text{ and } H \in \halfSp_Q\}$. If we falsely assume $\epsilon = 0$, then there exists an $H \in \halfSp_Q$ with $\mathbf{x} \in \del H \cap P$. Since $\mathbf{x} \in \relint {(K)}$, we may conclude from Lemma \ref{lem:relitive interior} that
$A\subset \del H$ and that $H \in \halfSp_{P_A}$, a contradiction.  We may conclude $\epsilon > 0$.

($\subseteq$) Let $\mathbf{y} \in B_\epsilon(\mathbf{x}) \cap P_A$.  Let's falsely assume $\mathbf{y} \in P\comp$.  There exists an $H \in \halfSp_P$ such that $\mathbf{y} \in H\comp$.  We have $\halfSp_P =  \halfSp_Q \dotcup \halfSp_{P_A}$. Since $\mathbf{y} \in P_A \Rightarrow \mathbf{y}$ is in all the half spaces of $\halfSp_{P_A}$, so $H \in \halfSp_Q$.  Since $\mathbf{x} \in P \subseteq H$, by Lemma \ref{lem:plane line} we may consider the unique $\del H \cap \lineSeg{\mathbf{x}}{\mathbf{y}}$, and from Lemma \ref{lem:line} conclude that 
$\| \del H \cap \lineSeg{\mathbf{x}}{\mathbf{y}} - \mathbf{x}\| < \|\mathbf{x} - \mathbf{y}\| < \epsilon$,
a contradiction to our choice that epsilon be the distance to the closest half space in $\halfSp_{Q}$.  We may conclude that $P_A \cap B_{\epsilon}(\mathbf{x}) \subseteq P \cap B_\epsilon(\mathbf{x})$.

($\supseteq$)
With $P \subseteq P_A$, it follows that $P_A \cap B_{\epsilon}(\mathbf{x}) \supseteq P \cap B_\epsilon(\mathbf{x})$.
\end{proof}




\subsection{The Necessary Criteria}

\begin{definition}
\label{def:min space}
 If $\aMin P \neq \emptyset $, we define the \textbf{min space} of $f$ on $P$ as the smallest $A \in \mathcal A_P$ with regards to inclusion that has $\aMin P \subseteq A$. Equivalently, the min space is the intersection of all the hyperplanes of $P$ that contain $\aMin P$.  Where $f$ and $P$ are implied, we omit them.
 \end{definition}

\begin{remark}
\label{remark:unique mins space}
If $\aMin P \neq \emptyset$, then the min space exists and is unique. If there are no hyperplanes of $P$ that contain $\aMin P$, giving $\aMin P \subseteq \aMinb \Hilb$, then the min space is $\Hilb$.
\end{remark}


\begin{proposition}
\label{prop: aMin P subset aMin A}
Let $A$ be the min space for some $f$ on $P$, then $ \aMin P \subseteq \aMinb A$.
\end{proposition}
 
 \begin{proof}
From Definition \ref{def:min space}, we have $\Min A \leq \Min P$.

Let's falsely assume there exists an $\mathbf{a} \in A$ such that $f(\mathbf{a}) < \Min P$, and let $\mathbf{x} \in \relint {(\aMin P)}$. 
By Proposition \ref{prop:epsilon ball}, there exists an $\epsilon > 0$ such that $B_\epsilon(\mathbf{x}) \cap P = B_\epsilon(\mathbf{x}) \cap P_A$.  The line segment $\lineSeg{\mathbf{a}}{\mathbf{x}}$ is entirely in $A \subset P_A$. We choose $t_y \defeq 1 - \frac{\epsilon}{2 \|\mathbf{a}-\mathbf{x}\|} \in (0,1)$, letting us define $\mathbf{y} \defeq (1- t_y)\mathbf{a}+t_y \mathbf{x} \in \lineSeg{\mathbf{a}}{\mathbf{x}} \cap B_\epsilon(\mathbf{x}) \cap P_A$. Since $\mathbf{y} \in \lineSeg{\mathbf{a}}{\mathbf{x}}$, by Lemma \ref{lem:line}.3 we have $f(\mathbf{y}) < f(\mathbf{x}) = \Min{P}$.  Proposition \ref{prop:epsilon ball} gives $\mathbf{y} \in P$, a contradiction.
\end{proof}
 
 \begin{proposition}[The Necessary Criteria]
 \label{thm:necessary}\label{def:necesarry criteria}
It is necessary for a min space $A$ to have $\aMin {P_A} = \aMinb A$.
\end{proposition}

\begin{proof}
 Let $A$ be a min space, and let's falsely assume that there exists an $\mathbf{x} \in P_A \setminus A$ such that $f(\mathbf{x}) \leq \Min A$, and let $\mathbf{y} \in \relint {(\aMin{P})}$ where Definition \ref{def:min space} gives $\mathbf{y} \in A$.  Then by Proposition \ref{prop:epsilon ball}, we have $\epsilon > 0$ such that $B_\epsilon(\mathbf{y}) \cap P = B_\epsilon(\mathbf{y}) \cap P_A$.  Since $\mathbf x \ne \mathbf y$ we choose an $\epsilon$ small enough that $\mathbf{x} \in B_\epsilon(\mathbf{y})\comp$.
 
 Since $\mathbf{x} \in P_A$, it follows from convexity of $P_A$ that $\lineSeg{\mathbf{x}}{\mathbf{y}} \subset P_A$. If there was a second point beside $\mathbf{y}$ in $\lineSeg{\mathbf{x}}{\mathbf{y}} \cap A$, then by the definition of an affine space, $\mathbf{x}$ would be in $A$ as well, so we have $\lineSeg{\mathbf{x}}{\mathbf{y}} \setminus \{\mathbf{y}\} \subset P_A \setminus A$.

 
As in Proposition \ref{prop: aMin P subset aMin A}, we may choose a $\mathbf{z} \in \lineSeg{\mathbf{x}}{\mathbf{y}}$ with a distance of $\frac{\epsilon}{2}$ from $\mathbf{y}$. We have $\mathbf{z} \in P \setminus A$, and by Lemma \ref{lem:line},  $f(\mathbf{z}) \leq f(\mathbf{y})$. If $f(\mathbf{z}) = f(\mathbf{y})$, this stands in contradiction to $\aMin P \subseteq A$, Definition \ref{def:min space}. If $f(\mathbf{z}) < f(\mathbf{y})$, we have a contradiction to $\mathbf{y} \in \aMin P$. We now have that for all $\mathbf{x} \in P_A \setminus A$, $f(\mathbf{x}) > \Min A$.

To complete the proof, we note that $A \subseteq P_A$.
\end{proof}

\begin{proposition}
\label{prop:wrap up}
Let $A \in \mathcal A_P$.  Then $A$ meets The Necessary Criteria (Proposition \ref{thm:necessary}), if and only if for all $B \in \mathcal B_A$ we have $\aMin {P_B} \subseteq P_A\comp \cup A$.
\end{proposition}

\begin{proof}
Let's assume $A$ meets The Necessary Criteria, $\aMin {P_A} = \aMinb A$.  For some $B \in \mathcal B_A$, we falsely assume there exists an $\mathbf{x} \in \aMin {P_B} \cap (P_A \setminus A)$. Then on account of $P_A \subset P_B$, we have $\mathbf{x} \in \aMin{P_A}$.  Since $x \in A\comp$ we have $\aMin{P_A} \ne \aMinb A$, a contradiction to the assumption of The Necessary Criteria.

Let's assume that for all $B \in \mathcal B_A$ we have $\aMin {P_B} \subseteq P_A\comp \cup A$. Let's falsely assume there exists an $\mathbf{x} \in P_A \setminus A$ with $f(\mathbf{x}) \le \Min A$. Then there exists a $C \in \mathcal A_{P_A} \subseteq \mathcal A_P$ not equal to $A$ that is the min space of $P_A$. Note, $C$'s membership in $\mathcal A_{P_A}$ insures that it is the intersection of a strict subset of $\halfSp_{P_A}$. Let's choose a $B \in \mathcal B_A$ such that the hyperplanes of $P$ that intersect to form $B$ are a superset of those that intersect to form $C$, giving $A \subset B \subseteq C$. Let $\mathbf{c} \in (\aMinb C \cap P_A)\setminus A$ and $\mathbf{b} \in \aMin {P_B}$. By Proposition \ref{thm:necessary}, we have $\mathbf{c} \in \aMin{P_C}$ which together with $P_B \subseteq P_C$ gives $f(\mathbf{c}) \le f(\mathbf{b})$. Since $\mathbf{c} \in P_A \subseteq P_B$, it follows that $f(\mathbf{b}) \le f(\mathbf{c}) \Rightarrow f(\mathbf{c}) = f(\mathbf{b}) \Rightarrow \mathbf{c} \in \aMin{P_B}$.  But this is a contradicting to the assumption that $\aMin {P_B} \subseteq P_A\comp \cup A$. 

\end{proof}

\begin{remark}
\label{remark: num immidiate super}
If $A$ is the intersection of $m$ hyperplanes of $P$, then it has up to $m$ immediate superspaces. Each can be generated by intersecting $m-1$ of the hyperplanes that intersect to make $A$.  Note that $m < \min(n,r)$ since $A$ can't be the intersection of more than the total number of hyperplanes, $r$, or more hyperplanes than there are dimensions, $n$.
\end{remark}

\begin{remark}
\label{remark:end of proof}
Let $A \in \mathcal A_{P}$.  If for all $B \in \mathcal B_A$, we know $\aMin {P_B}$, we can use Proposition \ref{prop:wrap up} to determine that $A$ does or does not meet The Necessary Criteria, without expensively computing $\aMinb A$. Furthermore, if $B$ has $\aMin{P_B} \cap P_A \ne \emptyset$, then we can correctly assign $\aMin {P_A} \gets \aMin {P_B} \cap P_A$ on account of $P_A \subset P_B$.
\end{remark}

  This is our fast fail. We show that it's complexity is $O(n)$ below in Corollary \ref{cor:if in test}.

\subsection{The Sufficient Criteria}

We showed that a min space $A$ meets The Necessary Criteria (\ref{def:necesarry criteria}) and has $\aMin P \subseteq \aMin A$.  We now consider The Sufficient Criteria for a space $A$ to have $\aMin A \subseteq P$.

\begin{proposition}[The Sufficient Criteria]
\label{prop:sufficient candidate}
\label{cor:min space sufficient}
\label{prop:sufficient}
Let $A$ meet The Necessary Criteria and $\aMinb A \cap P \neq \emptyset$. Then $\aMin P \subseteq \aMinb A$.
\end{proposition}

\begin{proof}

Let's assume $A$ meets The Necessary Criteria and $\aMinb A \cap P \neq \emptyset$.

Let $\mathbf{a} \in \aMinb A \cap P$ and $\mathbf{x} \in \aMin P$.  By Proposition \ref{thm:necessary} we have $\mathbf{a} \in \aMin {P_A}$. Since $P \subseteq P_A$ it follows that $\mathbf{a} \in \aMin P$ and $f(\mathbf{x}) = f(\mathbf{a}) \Rightarrow \mathbf{x} \in \aMin {P_A} = \aMinb A$.  
\end{proof}

Figure \ref{fig:Venn}, found in Neimand \cite{Neimand}, places the min space, The Necessary Criteria, and The Sufficient Criteria in context of one another as a Venn diagram of subsets of $\mathcal A_P$.
\begin{figure}[htp]
    \centering
    \includegraphics[width=0.5\textwidth]{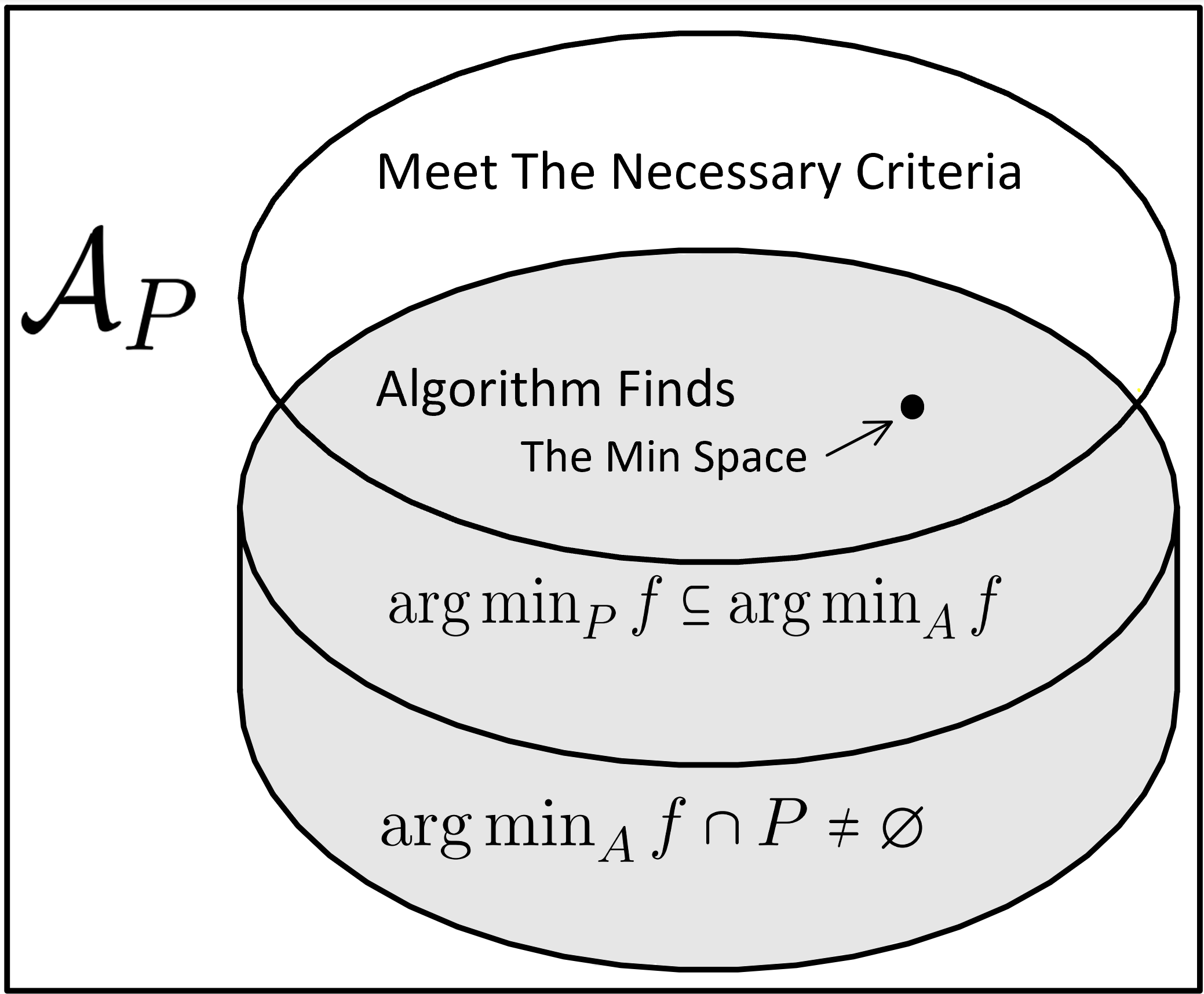} 
    \caption{Min Space, Necessary, and Sufficient, \cite{Neimand}.}
    \label{fig:Venn}
\end{figure}

\subsection{Functional Correctness}

\begin{corollary}
\label{cor:closed form}
If $\aMin P \ne \emptyset$, then the Closed Form Expression (\ref{eq: recursive}) is correct.
\end{corollary}

\begin{proof}

Let $B\in \mathcal B_A$ such that $\aMin{P_B} \cap P_A$ is non empty. 
Then there exists an $\mathbf{x} \in \aMin{P_B} \cap P_A$. We have $\mathbf{x}$ as a minimum point for $P_B \supset P_A$, and $\mathbf{x} \in P_A$ so $\mathbf{x} \in \aMin{P_A}$ giving $P_A \cap \aMin{P_B} \subseteq \aMin {P_A}$. Similarly, let $\mathbf{a} \in \aMin{P_A}$. We have $f(\mathbf{a}) = f(\mathbf{x})$ by virtue of $\mathbf{x} \in \aMin{P_A}$.  With $\mathbf{a} \in A \subset B$ we have $\mathbf{a} \in \aMin{P_B}$ giving $\aMin{P_A} \subseteq \aMin{P_B}$.

If for all $B \in \mathcal B_A $ we have $\aMin{P_B} \cap P_A = \emptyset$ then specifically $\aMin{P_B} \subset P_A\comp \subset P_A\comp \cup A$.  Proposition \ref{prop:wrap up} tells us that $A$ meets The Necessary Criteria, the desired result.

\end{proof}

This result lends itself to Algorithm \ref{algo:opt}, wherein we begin by finding $\aMinb \Hilb$, then at each iteration find the optimum of all the $P$-cones of the immediate sub-spaces, until one of those spaces meets The Necessary and then The Sufficient Criteria.

\begin{lemma}
When the \textbf{if else} statement in Algorithm \ref{algo:opt} Line  \ref{algo.line: if else} accesses $m_B$ for some $B \in \mathcal B_A$, that $m_B$ has already been saved to memory.

\end{lemma}
\begin{proof}
We will prove by induction on the affine space's co-dimension. The base $A$ is $\Hilb$ with co-dimension 0. The Hilbert space has no immediate superspaces, so $m_B$ for some $B \in \mathcal B_\Hilb$ is never called.  For an affine space with co-dimension $j$, we will assume that all the affine spaces of co-dimension $j-1$ had their requisite input available. We note that every affine space, $B$ of co-dimension $j-1$ was put up for review by Line \ref{algo.line: big parallel}, and generated an $m_B$ on Line \ref{Algo.line:m_A gets m_B cap P_A} or Line \ref{algo.line:m_A gets aMin A}. 
\end{proof}

\begin{lemma}
Every assignment of $m_A$ in Algorithm \ref{algo:opt} is correct.
\end{lemma}
\begin{proof}
This is a direct result of Lemma \ref{cor:closed form}.
\end{proof}

\begin{lemma}
If the else statement on Line \ref{algo.line:else} is reached then $A$ meets The Necessary Criteria.
\end{lemma}
\begin{proof}
This is direct result of Lemma \ref{cor:closed form}.
\end{proof}

\begin{theorem}
\label{thm:convex algo works}
The \textbf{return} set of Algorithm \ref{algo:opt} is equal to $\aMin P$.
\end{theorem}

\begin{proof}
 The two \textbf{for} loops will iterate over every affine space of $P$ until a space that meets the necessary and sufficient criteria is found, checked with a conditions false on Line \ref{algo.line: if else} and true on Line \ref{algo.line:sufficient}. By Remark \ref{remark:unique mins space}, if $\aMin P \neq \emptyset$, the min space exists, by Proposition \ref{thm:necessary} the min space meets the Necessary Criteria, and by Proposition \ref{prop: aMin P subset aMin A} the min space meets the sufficient criteria. If $\aMin P$ is nonempty, then a \textbf{return} set is guaranteed with Proposition \ref{prop:sufficient} ensuring the \textbf{return} set is $\aMin P$.


If $\aMin P = \emptyset$, then the conditions for The Sufficient Criteria (\ref{prop:sufficient}) are never met and the \textbf{if} statement on Line \ref{algo.line:sufficient} will reject every $A$.  Once all the affine spaces have been reviewed, the final \textbf{return} statement is called and the empty set is returned, which incidentally $\aMin \emptyset = \emptyset$.
\end{proof}

\begin{example}
Referring back to Example \ref{example:run through algorithm},  $\del \grave G$, whose minimum is the minimum for \lq A' is not the min space; $\del \grave G \cap \del \bar F$ is.  However $\del \grave G$ meets the necessary sufficient criteria. Those criteria are broader than the min space.
\end{example}

\subsection{Complexity}

\begin{lemma}
\label{lem:strict convex}
If $f$ is strictly convex, then for any convex $K$, $\aMin{K}$ has at most one element.
\end{lemma}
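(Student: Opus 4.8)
The plan is to argue by contradiction, exploiting the defining inequality of strict convexity on the midpoint of two putative minimizers. Note first that ``at most one element'' is vacuously true when $\aMin K$ is empty, so I only need to rule out the existence of two distinct minimizers.

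First I would suppose, for contradiction, that there exist $\textbf{a}, \textbf{b} \in \aMin K$ with $\textbf{a} \neq \textbf{b}$. By definition of $\aMin K$ we have $f(\textbf{a}) = f(\textbf{b}) = \Min K$, and in particular $f(\textbf{x}) \geq \Min K$ for every $\textbf{x} \in K$. Since $K$ is convex, the midpoint $\textbf{m} \defeq \tfrac{1}{2}\textbf{a} + \tfrac{1}{2}\textbf{b}$ lies in $K$ (this is the point $t = \tfrac12$ of $\lineSeg{\textbf{a}}{\textbf{b}} \subseteq K$).

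Next I would invoke strict convexity of $f$: because $\textbf{a} \neq \textbf{b}$ and $\tfrac12 \in (0,1)$, the strict form of Jensen's inequality gives $f(\textbf{m}) < \tfrac12 f(\textbf{a}) + \tfrac12 f(\textbf{b}) = \Min K$. This contradicts $f(\textbf{m}) \geq \Min K$, which holds since $\textbf{m} \in K$. Hence no two distinct minimizers can exist, and $\aMin K$ has at most one element.

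I do not expect any real obstacle here; the only point requiring a little care is being explicit about the definition of strict convexity being used (the strict inequality $f((1-t)\textbf{a} + t\textbf{b}) < (1-t)f(\textbf{a}) + t f(\textbf{b})$ for distinct $\textbf{a},\textbf{b}$ and $t \in (0,1)$), and noting that the convexity of $K$ is exactly what is needed to keep the midpoint inside $K$ so that the minimality inequality applies to it.
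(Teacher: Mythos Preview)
Your proof is correct and follows essentially the same contradiction argument as the paper: assume two distinct minimizers, apply the strict convexity inequality on the segment between them, and obtain a value strictly below the minimum. The only cosmetic difference is that the paper routes through Lemma~\ref{lem:min set is convex} to say the intermediate point lies in $\aMin K$ (yielding $f(\textbf{x}) < f(\textbf{x})$), whereas you use convexity of $K$ directly to get $f(\textbf{m}) < \Min K \leq f(\textbf{m})$; your version is slightly leaner since it avoids the auxiliary lemma.
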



We will limit the scope of this complexity analysis to strictly-convex $f$.  This significantly simplifies our work and implementation of the algorithm by insuring that each $m_B$ in Algorithm \ref{algo:test} has a single element, $m_B = \{\mathbf{m}_B\}$.  Computing whether $m_B \cap P_A = \emptyset$ then becomes checking if $\mathbf{m}_B \in P_A$.

\begin{definition}
\label{def:nu}
For clarity, we use `` $\cdot$ " to indicate the computational complexity of a process, as a function of $n$ and possibly some $\epsilon > 0$, rather than the outcome of that process. Thus $\ip{\cdot}{\cdot}$ is the number of steps it takes to compute inner product, ranging from $n$ to $n^3$ for finite inner products and likely  a function of $\epsilon>0$ for infinite Hilbert spaces. We have $\aMinb \bullet$ as the number of steps it takes to compute our black-box method, having `` $\cdot$ " in place of an affine space.
\end{definition}

\begin{lemma}
\label{lemma:memory retrieve}
Retrieving $m_B$ from memory on Line \ref{algo.line: if else} has $O(\min(n,r))$ complexity.
\end{lemma}

\begin{proof}
We store our $P$-cones in a trie imposing an arbitrary order to create the alphabet $\{ H_i \}_{i = 1}^r \defeq \halfSp_P$.  To avoid redundancy, we require that each $P$-cone be represented exclusively by ascending order of the indices of its half spaces. Accessing a specific $P$-cone in the trie is then the complexity of finding each of the $\codim(B)$ ordered half spaces that intersects to make the $P_B$ in an array, $\codim B \in O(\min(n,r))$.   For full implementation details including generation of the list of superspace and their indices, omitted here for brevity, see our implementation in java in \cite{Neimand_Code_Written_For_2022}.
\end{proof}

\begin{example}
For example, if the $P_B$ is the the intersection of $H_3, H_9$ and $H_{26}$, the we first find $H_3$ in the array $\{ H_i \}_{i = 1}^r$.  We look through the array of $H_3$'s children, $\{ H_i \}_{i = 4}^r$ to find $H_9$, and then look through the array of $H_9$'s children, $\{ H_i \}_{i = 10}^r$ to find $H_{26}$.  That node will know $m_{H_3 \cap H_9 \cap H_{26}}$ and includes all the information necessary to quickly generate $\del H_3 \cap \del H_9 \cap \del H_{26}$.
\end{example}

\begin{lemma}
\label{lemma:saving m_A}
Storing $m_A$ to memory so that it can be accessed as in Lemma \ref{lemma:memory retrieve} is $O(\min(n,r))$.
\end{lemma}

\begin{proof}
If $P_A$ is represented internally as the sequence $(H_{i_j})_{j=1}^p$ where $i_p \le r$ and $p \le n$ then by Lemma \ref{lemma:memory retrieve} accessing $P_{\bigcap_{j=1}^{p-1}\del H_{i_j}}$ is $O(\min(n,r))$ and appending the vertex for $H_{i_p}$ to $H_{i_{p-1}}$ to build $P_A$ is $O(1)$.
\end{proof}
\begin{corollary}
\label{cor:if in test}
Checking if $m_B \cap P_A \neq \emptyset$ on Line \ref{algo.line: if else} has the same complexity of computing inner product plus that of accessing $m_B$, namely $O(\ip{\cdot}{\cdot} + \min(n,r))$.
\end{corollary}

\begin{proof}
We begin with accessing $m_B$ which is discussed in Lemma \ref{lemma:memory retrieve}.

There exists an $H \in \halfSp_P$ such that $P_A = H \cap P_B$.  Since $\mathbf{m}_B \in P_B$ we only need to check if $\mathbf{m}_B \in H$.  This is done by verifying $\ip{\mathbf{m}_B}{\mathbf{n}_H} \leq b_H$ is nonempty. 
\end{proof}

\begin{lemma}
\label{lem:algo test for complexity}
Checking if $\exists B \in \mathcal B_A$ s.t. $m_B \cap (P_A \setminus A) \neq \emptyset$ on Line \ref{algo.line: if else} has $O(\min(n,r) \cdot (\min(n,r) + \ip{\cdot}{\cdot}))$ sequential computational complexity and $O(\min(n,r) + \ip{\cdot}{\cdot})$ time complexity if run in parallel over $min(n,r)$ processors.
\end{lemma}
\begin{proof}
By Remark \ref{remark: num immidiate super}, $A$ has $\min(n,r)$ immediate superspaces. Each of these can be checked in parallel of $\min(n,r)$ processors.  Corollary \ref{cor:if in test} then gives us the desired result.
\end{proof}

\begin{lemma}
\label{lem:opt inner if}
The \textbf{if} statement on Line \ref{algo.line:sufficient} is $O(r \cdot \ip{\cdot}{\cdot})$ sequential computational complexity and $O(\ip{\cdot}{\cdot})$ when run in parallel over $r$ processors.
\end{lemma}
\begin{proof}
Checking if a point is in $P$ requires checking that the point is in each $H \in \halfSp_P$.  Checking if a point is in a half-space is $O(\ip{\cdot}{\cdot})$, and since these $r$ checks are independent of one another, they can be performed in parallel.
\end{proof}

\begin{lemma}
\label{lem:algo test complexity}
Running the entire \textbf{if else} statement that begins on Line \ref{algo.line: if else} has $O(r \cdot \ip{\cdot}{\cdot} + \aMinb \bullet + \min(n,r)^2)$ sequential computational complexity, or $O(\ip{\cdot}{\cdot} + \aMinb \bullet + \min(n,r))$ time complexity if run in parallel over $r$ processors.
\end{lemma}

\begin{proof}
We saw in Lemma \ref{lem:algo test for complexity} the \textbf{if} statement's complexity.  If there is no fast fail, the \textbf{else} portion computes $\aMinb A$ and saves it, Lemma \ref{lemma:saving m_A}. We find the Line \ref{algo.line:sufficient} inner \textbf{if} statement complexity in \ref{lem:opt inner if}, so adding these three components we get $O(\min(n,r) \cdot (\ip{\cdot}{\cdot} + \min(n,r)) + \aMinb \bullet + r \cdot \ip{\cdot}{\cdot} + \min(n,r))$ computational complexity.  In simplifying, note that $\min(n,r) \leq r$.

For the parallel case, we have, $O(\ip{\cdot}{\cdot} + \aMinb \bullet + \ip{\cdot}{\cdot} + 2\min(n,r)),$ which also simplifies to the desired expression. The same $r$ threads that are used on Line \ref{algo.line: if else} can be used again on Line \ref{algo.line:sufficient}, so there's no need for more than $r$ processors.
\end{proof}

\begin{theorem}
\label{thm:final complexity}
Algorithm \ref{algo:opt} has $O(\min(r^n, 2^r)\cdot (r \cdot \ip{\cdot}{\cdot} + \aMinb \bullet +\min(n,r)^2))$ sequential computational complexity, and $O(\min(n,r) \cdot (\ip{\cdot}{\cdot} + \aMinb \bullet + \min(n,r)))$ time complexity when run in parallel over $O(\min(r^{\frac{1}{2}} \cdot 2^{r+\frac{1}{2}}, r^{n+1}))$ processors.
\end{theorem}
\begin{proof}
For sequential computational complexity we note that the two \textbf{for} loops in Algorithm \ref{algo:opt} iterate over all the affine spaces in $\mathcal A_P$, so we multiply our results from Lemma \ref{lem:algo test complexity} by $\vline \mathcal A_P \vline$.

For the parallel case, the outer loop cannot be run in parallel. The inner can.  The number of iterations for the inner loop, for any $i \leq \min(r,n)$ is $\binom {r}{i}$, because each affine space of co-dimension $i$ is the intersection of $i$ hyperplanes of $P$.  Consequently, with $\max_{i < \min(r,n)} \binom {r}{i}$ processors, the inner loop approaches $O(1)$ parallel time complexity.  The number of iterations of the outer loop is $\min(n,r)$.   

We note that $r$ is a maximum number of iterations for the outer loop since the co-dimension of an affine space $A\in \mathcal A_P$ is the number of hyperplanes that intersect to make $A$.  That number of hyperplanes, and therefore the co-dimension, cannot exceed the number of $P$'s hyperplanes, $r$.  We have $n$ as a maximum because the intersection of more than $n$ hyperplanes will be an empty set or redundant with the intersection of fewer hyperplanes.

All that remains is to compute $\max_{i \leq \min(n,r)} {\binom{r}{i}}$. If $n > \frac{r}{2}$, Pascal's triangle tells us that we have the maximum at $i = \frac{r}{2}$, the Central Binomial Coefficient. Stirling's formula \cite{CentralBino} tells us ${\binom {r}{\frac{r}{2}}} \sim (\pi r)^{-\frac{1}{2}} 2^{r+\frac{1}{2}}$. If $n < \frac{r}{2}$, then the maximum number of processors for the inner loop becomes $\binom {r}{n} \leq r^{n}$. This puts the total number of processors for the inner loop at $O(\min(r^{-\frac{1}{2}} \cdot 2^{r+\frac{1}{2}}, r^n))$.  

Multiplying by the the number of processors we need for the \textbf{if else} statement gives us the desired result.
\end{proof}

\begin{corollary}
When $r >> n$ and we use the standard inner product, we have polynomial sequential complexity as a function of $r$, specifically $O(r^n \cdot (r \cdot n + \aMinb \bullet))$ and parallel complexity that's constant, $O(n \cdot (n + \aMinb{\bullet}))$, using $r^n$ processors. 

When $n >> r$ then sequential and parallel complexities, as well as the number of processors, as a function of $n$, are the complexity of the black-box method plus the inner product method. \end{corollary}
\begin{proof}
This is a direct result of Theorem \ref{thm:final complexity}.  We note that for the standard inner product with $r >> n$ then $O(r \cdot \ip{\cdot}{\cdot} + \min(n,r)^2) = O(r \cdot n)$. 
\end{proof}


Note that unlike many interior point methods, the complexity is not a function of accuracy; except for the black-box method, and the inner product computation, there is no $\epsilon$ term that compromises speed with the desired distance from the correct answer.

\section{Non-Convex Polyhedra}
\label{sec:non-convex}
This section expands the results of the previous section to conclude with a multi-threaded algorithm for computing the minimum over non-convex polyhedral constraints. The expanded algorithm finds all local minimums as they meet The Necessary Criteria, and minimum of the points that meet The Necessary Criteria is the global optimum. Our non-convex constraints algorithm exploits the representation of non-convex polyhedra to achieve faster results than the convex algorithm presented above.

We will work with the description from \cite{edelsbrunner1995algebraic} for non-convex polyhedra, where the polyhedron is represented by its faces, where each face, a convex polyhedron itself, has knowledge of its own faces and its neighbors.  Together with the definition of non-convex polyhedra in \cite{edelsbrunner2001geometry}, we define a non-convex polyhedron as follows.

\begin{definition}
A non-convex polyhedron $P \subset \mathbb R^n$ is the union of a set of possibly unknown convex polyhedra, $\mathcal P$.  Namely, $P = \bigcup \mathcal P$. We  denote the the set of faces of $P$ with $\mathcal F_P$ and include $P \in \mathcal F_P$ as the lone exception to the requirement that $P$'s faces be convex. Note that $\mathcal F_P$ is closed to intersections.
\end{definition}

\begin{definition}
\label{def: aff 2}
We can redefine $P$'s affine spaces, $\mathcal A_P$ so that  $\mathcal A_P = \{A \subseteq \mathbb R^n \mid \forall \mathcal P $ s.t. $\bigcup \mathcal P = P$,  there exists a $Q \in \mathcal P \text{, with } A \in \mathcal A_Q \text{ and } \exists F \in \mathcal F_P \text{ such that } \aff{(F)} = A\} \cup \{\mathbb R^n\}$.
\end{definition}

\begin{lemma}
If P is convex, then $\mathcal A_P$ under Definition \ref{def: aff 2} is a subset of $\mathcal A_P$ under Definition \ref{def:affine space}, and that subset includes every affine space in Definition \ref{def:affine space} that has a non empty intersection with $P$.
\end{lemma}

\begin{proof}
Let $A \in \mathcal A_P$ for Definition \ref{def: aff 2}.  Then there exists an $F \in \mathcal F_P$ so that $\aff{(F)} = A$. Each $n-1$ dimensional face in $\mathcal F$ has $\aff F = \del H$ for some $H\in \halfSp_P$, and each lower dimensional face is an intersection of those hyperplanes.  We may conclude that $A \in \mathcal A_P$ for Definition \ref{def:affine space} since it is the intersection of hyperplanes of $P$.
The intersection of $A$ and $P$ is nonempty since $A$ contains a face of $P$.
\end{proof}

Though $\del P \subseteq \bigcup \mathcal A_P$, in many cases, $\mathcal A_P$ under Definition \ref{def: aff 2} is substantially smaller than it is under Definition \ref{def:affine space}. Definition \ref{def: aff 2} excludes affine spaces that have an empty intersection with $P$. The pruning is possible because of the additional information in our non-convex polyhedral representation.

We use the following result to algorithmically construct $\mathcal A_P$, Definition \ref{def: aff 2}.

\begin{lemma}
A necessary condition for a set of $n-1$-dimensional faces $\phi \subseteq \mathcal F_P$ to have $\aff(\bigcap_{F \in \phi} F) \in \mathcal A_P$ is that the internal angles between every pair of faces in $\phi$ is less than $180$ degrees.  
\end{lemma}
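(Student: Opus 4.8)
The plan is to show that if two of the $n-1$-dimensional faces in $\phi$ meet at a dihedral angle of at least $180$ degrees, then the affine hull of their intersection cannot be one of $P$'s affine spaces in the sense of Definition \ref{def: aff 2}. Since $\bigcap_{F\in\phi}F \subseteq F_1 \cap F_2$ for any pair $F_1,F_2 \in \phi$, it suffices to treat the case $\phi = \{F_1,F_2\}$: if $\aff(F_1\cap F_2) \notin \mathcal A_P$ then a fortiori $\aff(\bigcap_{F\in\phi}F)$, being the affine hull of a subset, is either strictly smaller or equal, and in neither case does it get certified by a single convex piece $Q$ in the way Definition \ref{def: aff 2} demands. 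So the real content is a statement about two adjacent facets.

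First I would set up the local geometry. Each $F_i$ is an $(n-1)$-dimensional face, so $\aff F_i = \del H_i$ for half-spaces $H_i$ coming from the convex pieces of $\mathcal P$ that carry these faces; their intersection $\del H_1 \cap \del H_2$ is an $(n-2)$-dimensional affine space $A$ (the degenerate case $\del H_1 = \del H_2$ is handled separately and trivially). The ``angle between the faces'' is the interior dihedral angle of $P$ measured across the ridge $F_1\cap F_2$: in a small ball around a relative-interior point of $F_1\cap F_2$, $P$ looks like the union of the convex pieces meeting there, and the angle is the total angular measure (in the $2$-plane normal to $A$) that $P$ occupies between the two facet directions. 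I would make this precise by picking $x \in \relint(F_1 \cap F_2)$, intersecting with $B_\epsilon(x)$, and projecting to the orthogonal complement of $A$, where the picture becomes a finite union of planar wedges.

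Next comes the crux: assume the angle is $\geq 180$ degrees and derive that $A = \aff(F_1\cap F_2)$ is not in $\mathcal A_P$. By Definition \ref{def: aff 2}, membership would require a single convex polyhedron $Q \in \mathcal P$ with $A \in \mathcal A_Q$ and a face $G \in \mathcal F_P$ with $\aff G = A$. The key observation is that a convex piece $Q$ contributing an affine space equal to $A$ must locally sit inside a wedge of angle strictly less than $180$ degrees at the ridge — $Q$ is convex, and $A$ is an intersection of two of $Q$'s own bounding hyperplanes meeting $Q$, so near $x$ the set $Q$ is confined to one wedge whose opening is at most the angle between those two hyperplanes, which is $<180$ degrees (if it were $\geq 180$, the two hyperplanes would not both be supporting $Q$ at $A$, contradicting $A \in \mathcal A_Q$ being a genuine face-intersection with nonempty relative interior meeting $Q$). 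But then no face $G$ with $\aff G = A$ can be a face of $P$: a face of $P$ of dimension $n-2$ along the ridge exists only where $P$ is ``pinched'' to angle $<180$ degrees there, and our hypothesis says the total angle $P$ occupies is $\geq 180$, so $A$ lies in the interior of $P$ (if angle $>180$) or on a flat portion of a facet (if angle $=180$, the two facets are coplanar and the ridge is not a face at all). Either way $\aff G \neq A$ for every $G \in \mathcal F_P$, contradicting Definition \ref{def: aff 2}.

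The main obstacle I anticipate is making the phrase ``angle between the faces'' and ``$P$ is pinched to angle $<180$'' rigorous in arbitrary dimension $n$, since the dihedral angle is only canonically defined across a ridge of codimension $2$ and one must argue that the local structure of the non-convex polyhedron $P$ near $\relint(F_1\cap F_2)$ is a finite union of half-flats glued along $A$, whose angular extent in the normal $2$-plane is well-defined. I would handle this by invoking the representation from \cite{edelsbrunner1995algebraic, edelsbrunner2001geometry} — that each face knows its neighbors and $P$ is a union of convex polyhedra — to reduce the local picture to a planar arrangement of wedges, after which the angle is just the Lebesgue measure of the union of the corresponding arcs on a small circle, and the claim ``a face of dimension $n-2$ occurs only at an angle $<180$'' becomes the elementary planar fact that a point on the boundary of a union of wedges is a genuine vertex of that union iff the wedges do not fill a half-plane around it.
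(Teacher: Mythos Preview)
Your approach diverges from the paper's in a substantive way. The paper's proof is a two-line convexity observation: pick $\textbf{x}$ in the relative interior of one facet $F$, look at its projection $\Pi_G(\textbf{x})$ onto the other facet $G$, and note that when the dihedral angle exceeds $180$ degrees the open segment $\lineSeg{\textbf{x}}{\Pi_G(\textbf{x})}$ lies outside $P$; hence no convex $Q\subseteq P$ can carry both $F$ and $G$ as faces, and condition (i) of Definition~\ref{def: aff 2} (the existence of $Q\in\mathcal P$ with $A\in\mathcal A_Q$) fails. You instead build a local picture in the $2$-plane normal to the ridge and argue primarily against condition (ii) --- that no face $G\in\mathcal F_P$ has $\aff G = A$ because the ridge is either interior (angle $>180$) or absorbed into a flat facet (angle $=180$). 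Your route gives a cleaner treatment of the borderline $=180$ case, which the paper's strict inequality does not address; the paper's route is far shorter and avoids the need to formalise dihedral angles in higher dimension.

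There is, however, a genuine gap in your reduction to pairs. You claim that showing $\aff(F_1\cap F_2)\notin\mathcal A_P$ forces $\aff(\bigcap_{F\in\phi}F)\notin\mathcal A_P$ because the latter is ``the affine hull of a subset.'' That inference is not valid: $\mathcal A_P$ is not downward-closed under taking affine subspaces, and a lower-dimensional intersection could in principle be certified by a different $Q\in\mathcal P$ and a different face of $P$ even when the codimension-$2$ ridge above it is not. The paper's argument does not need this reduction at all --- once the segment $\lineSeg{\textbf{x}}{\Pi_G(\textbf{x})}$ exits $P$, \emph{every} convex $Q\subseteq P$ containing both endpoints is ruled out, and in particular no $Q$ can have the two half-spaces supporting $F$ and $G$ among its $\halfSp_Q$ in the correct orientation, regardless of how many other facets are in $\phi$. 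If you want to keep your approach, you should drop the reduction and argue directly that any $Q\in\mathcal P$ with $\aff(\bigcap_\phi F)\in\mathcal A_Q$ would, near a point of $\relint(\bigcap_\phi F)$, have to occupy a convex wedge bounded by the hyperplanes of $F_1$ and $F_2$, which is impossible at a reflex angle.
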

\begin{proof}
Let $F,G \in \phi$ with the angle between them greater than $180$ degrees; we can choose a point $\mathbf{x} \in \relint(F)$ so that the internal angle between $\overlinesegment{\mathbf{x}, \Pi_{F \cap G}(\mathbf{x})}$ and $\overlinesegment{\Pi_G(\mathbf{x}), \Pi_{F \cap G}(\mathbf{x})}$ is greater than $180$ degrees.  While $\mathbf{x}\in P$ and $\Pi_G(\mathbf{x}) \in P$ the line $\overlinesegment{\mathbf{x}, \Pi_G(\mathbf{x})}$, excluding its endpoints, is outside of $P$. There is no convex set with faces $F$ and $G$, and therefor we can construct a partition $\mathcal P$ without the affine space.
\end{proof}

We can restrict the elements of $\mathcal A_P$ because if $\mathbf{x} \in \aMin P$, there exists a $Q \in \mathcal P$ such that $\mathbf{x} \in \aMin Q$. It follows that $\mathbf{x}$ meets The Necessary Criteria for all such $Q$.

Algorithms exist for decomposing non-convex polyhedra into their convex components, \cite{bajaj1992convex}, however no practical advantage is obtained by this decomposition. By iterating over $\mathcal A_P$ from definition \ref{def: aff 2}, we iterate over every face of each polyhedron in $\mathcal P$ that might contain $P$'s optimal point. 

\begin{corollary}
Let $G \in \mathcal P$. If $\mathbf{x} \in \aMin P$ has $\mathbf{x} \in G$, either $\mathbf{x} \in \aMinb {\mathbb R^n}$ or $\mathbf{x} \in \del P$, the boundary of $P$. 
\end{corollary}
\begin{proof}
We may consider the more general statement: If $\mathbf{x}$ is an optimal point of $P$, then $\mathbf{x} \in \aMinb {\mathbb R^n}$ or $\mathbf{x} \in \aMin {\del P}$ which is a direct result of the convexity of $f$.
\end{proof}

For purposes of checking The Necessary Criteria, we need to define the $P$-cone of an affine space, $A \in \mathcal A_P$, where $P$ is non-convex.  The natural choice is to find a convex $Q \in \mathcal P$ and use $Q_A$.  However, since we don't know the composition of $\mathcal P$, we need a practical way to build $P_A$.  We do this exactly as as we did in Algorithm \ref{algo:opt}.  

\begin{definition}
\label{def:p cone 2}
Let $A \in \mathcal A_P$. There exists an $F \in \mathcal F_P$ such that $\aff{(F)} = A$.  Every such $F$ is the intersection $n-1$ dimensional faces, $\phi \subseteq \mathcal F_P$ such that $F = \bigcap \phi$.  For each $G \in \phi$ we have an $H_G \in \halfSp_P$ such that $\del H_G = \aff{(G)}$.  Then $P_A = \bigcap_{G \in \phi} H_G$.
\end{definition}

\begin{lemma}
If $P$ is convex, then Definition \ref{def:p cone 2} is equivalent to Definition \ref{def:P cone}.
\end{lemma}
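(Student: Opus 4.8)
The plan is to unwind both definitions and show each side contains the other. Fix a convex polyhedron $P$. Under Definition \ref{def:P cone}, for $A \in \mathcal A_P$ we have $P_A = \bigcap\{H \in \halfSp_P \mid A \subset \del H\}$. Under Definition \ref{def:p cone 2}, we pick an $F \in \mathcal F_P$ with $\aff F = A$, write $F = \bigcap \phi$ for the set $\phi$ of $(n-1)$-dimensional faces containing $F$, and set $P_A = \bigcap_{G \in \phi} H_G$ where $\del H_G = \aff G$. So I need: $\{H_G \mid G \in \phi\} = \{H \in \halfSp_P \mid A \subset \del H\}$, or at least that the two intersections coincide.

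First I would establish the inclusion ``Definition \ref{def:p cone 2} $\subseteq$ Definition \ref{def:P cone}'', i.e. each $H_G$ with $G \in \phi$ satisfies $A \subset \del H_G$. This is immediate: $F \subseteq G \subseteq \aff G = \del H_G$, and since $\del H_G$ is affine and contains $F$ it contains $\aff F = A$; hence $H_G$ appears in the intersection defining $P_A$ under Definition \ref{def:P cone}, so that intersection is contained in $\bigcap_{G\in\phi} H_G$. For the reverse inclusion I would argue that any $H \in \halfSp_P$ with $A \subset \del H$ is redundant in the intersection once all the $H_G$ are included, or better, that $\del H$ arises as $\aff G$ for some $(n-1)$-face $G \supseteq F$. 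Here I would invoke the standard face structure of a convex polyhedron: if $A = \aff F \subseteq \del H$, then $F \subseteq P \cap \del H$, and $P \cap \del H$ is a face of $P$ contained in some facet whose affine hull is $\del H$; that facet is one of the $(n-1)$-dimensional faces of $P$ containing $F$, hence lies in $\phi$, so $H = H_G$ for that $G$. This shows $\{H \in \halfSp_P \mid A \subset \del H\} \subseteq \{H_G \mid G \in \phi\}$ up to redundant half-spaces, giving $\bigcap_{G\in\phi} H_G \subseteq P_A$.

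The main obstacle is the bookkeeping around \emph{redundant} and \emph{non-facet-defining} half-spaces in $\halfSp_P$: a half-space $H$ with $A \subset \del H$ need not have $\del H = \aff G$ for a genuine $(n-1)$-dimensional face $G$ of $P$ (for instance if $\del H$ meets $P$ only in a lower-dimensional set, or if $H$ is redundant altogether). I would handle this by noting that such an $H$ either contains $P$ entirely on one side with $\del H \cap P$ of dimension $< n-1$ — in which case $\del H \cap P$ is still a face, contained in a facet that is in $\phi$, and $H \supseteq H_G$ for that $G$ so $H$ is redundant in the intersection — or $H$ is globally redundant and can be dropped without changing $P$ or $P_A$. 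Either way the finite intersections agree. A clean alternative, if the paper is willing to assume $\halfSp_P$ is the irredundant facet description, is to observe that then $\del H$ is a facet hyperplane for every $H$, the map $H \mapsto \aff^{-1}$ is a bijection onto facets, and $\{H : A \subset \del H\}$ corresponds exactly to the facets containing $F$, which is precisely $\phi$ — making the equality of the two $P_A$'s a one-line consequence. I would present the argument under that mild assumption and remark that the redundant case only enlarges $\halfSp_P$ by half-spaces that do not affect either intersection.
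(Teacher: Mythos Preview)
The paper does not actually supply a proof of this lemma: it is stated immediately before a remark and left unproved. So there is nothing to compare your argument against.

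That said, your outline is a sound way to fill the gap. The easy inclusion is exactly right: each $H_G$ with $G\in\phi$ has $A=\aff F\subseteq\aff G=\del H_G$, so it belongs to the index set in Definition~\ref{def:P cone}. For the other inclusion you have correctly located the only real issue, namely that an $H\in\halfSp_P$ with $A\subset\del H$ need not cut out a genuine $(n-1)$-dimensional face of $P$. Your proposed fix---either assume $\halfSp_P$ is an irredundant facet description, in which case the correspondence between half-spaces and facets is a bijection and the two index sets coincide on the nose, or else observe that a redundant or lower-dimensional $H$ is already implied by the facet half-spaces in $\phi$ and hence does not change the intersection---is the standard way to handle this and is adequate here. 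One small point worth tightening: when you write ``$H\supseteq H_G$ for that $G$'' in the redundant case, what you really need is that $\bigcap_{G\in\phi}H_G\subseteq H$, which follows because $A\subset\del H$ forces $P_A$ (in the sense of Definition~\ref{def:P cone}) to lie in $H$, and you have already shown $\bigcap_{G\in\phi}H_G\supseteq P_A$ wait---that is the wrong direction. The cleanest route is simply to note that Definition~\ref{def:p cone 2} implicitly presumes every $(n-1)$-face $G$ has some $H_G\in\halfSp_P$ with $\del H_G=\aff G$, i.e.\ the facet-defining half-spaces are all present; under that reading, any additional redundant $H\in\halfSp_P$ with $A\subset\del H$ already contains $\bigcap_{G\in\phi}H_G$ because that intersection equals the $P$-cone of $A$ computed from the irredundant description, and adding $H$ to the Definition~\ref{def:P cone} intersection does not shrink it. Stating the irredundancy assumption explicitly, as you suggest, is the tidiest resolution.
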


\begin{remark}
Let $Q,R$ be convex polyhedra with $A \in \mathcal A_Q \cap \mathcal A_R$ and $\halfSp_{Q_A} = \halfSp_{R_A}$, then if $A$ meets The Necessary Criteria \ref{def:necesarry criteria} for $Q$, it also does for $R$.  That is to say, the elements of $\mathcal P$ don't matter, only the neighborhood of $A$.
\end{remark}

\begin{definition}
\label{def:min space 2}
We redefine a min space and say that $A \in \mathcal A_P$ is a min space on a non-convex polyhedron, $P$, if there is a convex polyhedron $Q \subseteq P$ such that $A$ is a min space on $Q$.
\end{definition}  

Existence of a min space (Def \ref{def:min space 2}) is immediate from the definition of a non-convex polyhedron, though unlike in Definition \ref{def:min space}, it is not unique. The following corollary follows.

\begin{corollary}
\label{remark:nec crit for non-convex}
Each min space (Def. \ref{def:min space 2}) meets The Necessary Criteria \ref{def:necesarry criteria}.
\end{corollary}

\begin{proof}
The necessary conditions for a space to be a min space remain the same, because for any $\mathbf{x} \in \aMin P$ we have a $Q \in \mathcal P$ so that $\mathbf{x} \in \aMin Q$. 
\end{proof}

This means that if some $A \in \mathcal A_P$ meets the Necessary Criteria (\ref{def:necesarry criteria}), exactly which $Q \in \mathcal P$ it's in doesn't matter. 

The sufficient conditions, checking if $\mathbf{x} \in P$ change a bit.  We don't know the polyhedra of $\mathcal Q$ and it will not work to check if the point is in all of the half spaces of $P$, since $P$ is not necessarily the intersection of half spaces.  We therefor do not check The Sufficient Criteria (\ref{prop:sufficient}).

\begin{proposition}[The Sufficient Criteria for a Non-Convex Polyhedron]
Let $\mathcal M$ be the set of affine spaces that meet The Necessary Criteria and have that for each $A \in \mathcal M$ there exists an $F \in \mathcal F_p$ such that $\aff F = A$ with $\aMinb A \in F$, then $\aMin P =  \arg \min \{f(\mathbf{x})\mid \mathbf{x} \in \bigcup \mathcal M\}$.
\end{proposition}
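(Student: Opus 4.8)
The plan is to show two inclusions: $\aMin P \subseteq \arg\min\{f(\mathbf x)\mid \mathbf x \in \bigcup \mathcal M\}$ and the reverse. The structural fact I will lean on is that, by Definition \ref{def:min space 2} and the definition of a non convex polyhedron, every optimal point of $P$ is an optimal point of some convex $Q \in \mathcal P$; and by Corollary \ref{remark:nec crit for non convex}, the min space of $f$ on that $Q$ meets The Necessary Criteria. So first I would take $\mathbf x \in \aMin P$, pick $Q \in \mathcal P$ with $\mathbf x \in \aMin Q$, let $A$ be the min space of $f$ on $Q$, and observe $\mathbf x \in \aMin Q \subseteq A$. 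By the Necessary Criteria (\ref{prop:necessary}.\ref{prop.bullet:equal}), $\mathbf x \in \aMin A$, so $\aMin A \cap A \ne \emptyset$; since $A \in \mathcal A_P$, there is $F \in \mathcal F_P$ with $\aff F = A$, and I need to argue $\aMin A \subseteq F$ — this is where $\mathbf x \in Q \subseteq P$ together with $\mathbf x$ being a minimizer pins the minimizer set inside the face $F$ rather than merely its affine hull. Granting that, $A \in \mathcal M$, hence $\mathbf x \in \bigcup \mathcal M$, and since $f(\mathbf x) = \Min P \le f(\mathbf y)$ for every $\mathbf y \in \bigcup\mathcal M \subseteq P$, we get $\mathbf x \in \arg\min\{f \mid \bigcup\mathcal M\}$.

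For the reverse inclusion, take $\mathbf z \in \arg\min\{f(\mathbf x)\mid \mathbf x \in \bigcup\mathcal M\}$. Since each $A \in \mathcal M$ has $\aMin A \subseteq F \subseteq P$ for its associated face $F$, we have $\bigcup\mathcal M \subseteq P$, so $\mathbf z \in P$ and $f(\mathbf z) \ge \Min P$. To get equality, I use the first inclusion: some actual optimizer $\mathbf x^\ast \in \aMin P$ lies in $\bigcup\mathcal M$, so $f(\mathbf z) \le f(\mathbf x^\ast) = \Min P$. Hence $f(\mathbf z) = \Min P$ and $\mathbf z \in \aMin P$. Combining the two inclusions gives the claimed set equality; I should also note the non-degenerate case where $\aMin P = \emptyset$, in which case no $A$ can satisfy $\aMin A \subseteq F \subseteq P$ with the Necessary Criteria forcing a genuine global minimizer, so $\mathcal M = \emptyset$ and both sides are empty — though this edge case may warrant a separate sentence or an implicit assumption that a minimizer exists.

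The main obstacle is the step $\aMin A \subseteq F$. Knowing $A = \aff F$ and that $A$ meets the Necessary Criteria only tells us $\aMin A = \aMin{P_A}$ (by \ref{prop:necessary}.\ref{prop.bullet:P-cone equal}) and that $\aMin A$ lands in the affine hull of $F$; it does not immediately place it inside the bounded convex face $F$ itself. The resolution should go through the definition of $\mathcal M$, which explicitly requires $\aMin A \in F$ as a membership condition for inclusion — so the burden is to confirm that the $A$ produced from an optimizer of $P$ genuinely satisfies $\aMin A \subseteq F$, using that $\mathbf x \in \aMin A \cap Q$ and $Q$'s face containing $\mathbf x$ is a subface of $F$, together with strict/weak convexity of $f$ (Lemma \ref{lem:line}) to keep all minimizers on the same face. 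I would isolate this as a short lemma: if $\mathbf x \in \aMin P$ and $F \in \mathcal F_P$ is the smallest face containing $\mathbf x$, then $\aff F$ meets the Necessary Criteria and $\aMin{\aff F} \subseteq F$.
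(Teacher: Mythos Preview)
Your two-inclusion argument is essentially the paper's own strategy: show that every candidate point collected by $\mathcal M$ lies in $P$ (this is the paper's opening sentence), and then invoke Corollary~\ref{remark:nec crit for non convex} to conclude that some min space meeting the Necessary Criteria already captures $\aMin P$ (this is the paper's closing line). The paper's proof is far terser than yours and never sets up the inclusions explicitly, but the logical skeleton is identical.

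You are right to isolate the step $\aMin A \subseteq F$ as the crux, and in fact the paper's own proof does not handle it either: its final displayed set is filtered by ``$\mathbf x \in P$ and $A$ meets the Necessary Criteria,'' which is weaker than the condition ``$\aMin A \in F$'' that defines $\mathcal M$ in the statement. Under strict convexity of $f$ (which the paper assumes for Algorithm~\ref{algo:filter} and its complexity analysis via Lemma~\ref{lem:strict convex}) the issue disappears, since $\aMin A$ is then a singleton and $\mathbf x \in \aMin P \cap \aMin A$ together with $\mathbf x \in F$ forces $\aMin A = \{\mathbf x\} \subseteq F$. Without strict convexity your proposed lemma is the correct patch, but be careful: you must take $F$ to be the \emph{smallest} face of $P$ containing $\mathbf x$, not merely the face whose affine hull happens to equal the min space of some $Q \in \mathcal P$; otherwise $\aMin A$ can extend beyond $F$ while still intersecting $P$, and that particular $A$ would fail the membership test for $\mathcal M$ even though it carries the global optimum.
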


\begin{proof}
Let $\mathbf{x} \in A \in \mathcal M$, then by the assumptions set above, $\mathbf{x} \in P$.

Corollary \ref{remark:nec crit for non-convex} gives us $\aMin P = \arg \min \{f(\mathbf{x}) \vert \mathbf{x} \in P$ and $ \mathbf{x} \in \aMinb A $  where $ A $ meets the Nec. Criteria $\}$.
\end{proof}

Where $f$ is convex, the minimum on the right hand side of the equation is a taken from a finite set and is easy to compute.

\begin{remark}
We have $P \in \mathcal F_P$, often with $\aff{(P)} = \mathbb R^n \in \mathcal A_P$.  If $\mathbb R^n \in \mathcal M$, we can check $\aMinb {\mathbb R^n}$ for membership in $P$ with an algorithm like the one in Akopyan et al. \cite{AKOPYAN2017627}.  For checking membership in any other $F \in \mathcal F_P$, we note that $F$ is a convex polyhedron. Checking membership in $F$ is substantially faster than checking membership $P$.
\end{remark}

With the curated $\mathcal A_P$, and the adjusted membership test, Algorithm \ref{algo:opt} may proceed as above, except that when a point is found to be in $P$, it is saved and the algorithm continues.  On completion, the minimum of all the points that have been saved is the minimum of $P$.  If the set of saved points is empty, there is no minimum.  For details, see Algorithm \ref{algo:non-convex}.

\begin{algorithm}[H]
\label{algo:non-convex}
\DontPrintSemicolon
\KwIn{A set of faces $\mathcal F_P$ and a function  $\funcp{f}{\mathbb R^n}{\mathbb R}{conv.}$}
\KwOut{$\Min P$}

$\mathcal M \gets \emptyset$

\For{$i \gets 0$ \KwTo $\min(n,r)$}{
    \ForEach{$A \in \mathcal A_P$ {\upshape with} $\codim(A) = i$ \bf{in parallel}}{
    \lIf{$\exists B \in \mathcal B_A$ {\upshape s.t.} $m_B \cap P_A \neq \emptyset$ }{
            $m_A \gets m_B \cap P_A$ 
        }\Else{
            $m_A \gets \aMinb A$ is computed and saved.\\
            
                Let $F \in \mathcal F$ such that $\aff F = A$ \\
            \lIf{$m_A \cap F \neq \emptyset$}{ 
                add $m_A$ to $\mathcal M$.
            }
        }
    }
}

\Return {$\arg \min \{f(\mathbf{x}) \mid \mathbf{x} \in \mathcal M\}$
}

\caption{Finds $\aMin P$ for a Non-Convex Polyhedron $P$}
\end{algorithm}

\section{Numerical Results}
\label{section::numerical results}

We created polyhedra for testing the algorithm by choosing uniformly random vectors, $\{\mathbf{v}_i\}_{i=1}^r \subset \mathbb R^n$ with all $\|\mathbf{v}\| = 1$. We then built half-spaces from these vectors, $\mathbf{v} \mapsto \{\mathbf{x} \in \Hilb \mid \ip{\mathbf{v}}{\mathbf{x}} \leq 1\}$ and used their intersections as our polyhedra.  We'll call these polyhedra pseudo random polyhedra.

We tested this algorithm by repeatedly searching for $\Pi_P(\mathbf{x})$.  In each case $P \subset \mathbb R^n$ was a pseudo random polyhedron and $\mathbf{x} \in \mathbb R^n$ with $\mathbf{x} \defeq (10,0,0,...)$. Note that the code is set up to easily test an arbitrary strictly-convex function in a Hilbert space provided a minimization method. Code implementation in Java can be found at \cite{Neimand_Code_Written_For_2022}.

The significant improvement of our algorithm over the brute force method in \cite{NaiveProjection} is that we don't check all the affine spaces with the black-box method.  Taking the average of 100 projections trials onto pseudo random polynomials, we show in Table \ref{table: fraction} Right the fraction of affine spaces in $\mathcal A_P$ over which the algorithm resorts to using the black-box optimization method.

We see in Table \ref{table: fraction} Right that as both $r$ and $n$ increase, the number of spaces over which the black-box method is used decreases.  This decrease represents a significant improvement over \cite{NaiveProjection}, however the reader should be cautioned that numerical experimentation showed that complexity results in practice roughly matched theoretical complexity proven earlier.

The results of this additional experimentation can be found in Table \ref{table:run time} Left, where for each $r,n$ we ran the algorithm 100 times and report the average time in seconds each experiment took.  We used an Intel(R) Core(TM) i5-8250U CPU @ 1.60GHz 1.80 GHz with 8 GB of installed RAM and 4 CPU's, which is to say, a 2018 off the rack Microsoft Surface.

The value of the numerical results in Table \ref{table: fraction} is limited by the small scope of the data.  With more processors available to experiment, more valuable data could be attained. This would allow testing on larger sample sets and meaningful head to head results against other competitive algorithms.
\begin{table}[ht]

\caption{Algorithm \ref{algo:opt} in Seconds Left, $\lvert \{A \in \mathcal A_P\mid$Algo. \ref{algo:opt} calls $\aMinb A\} \rvert / \lvert \mathcal A_P \rvert$ Right}

\scalebox{0.7}{
\parbox{1\linewidth}{

\centering

\label{table:run time} \label{table: fraction}
\begin{tabular}{p{10pt}c|ccccc|ccccc}
\multicolumn{12}{c}{Number of Dimensions $(n)$} \\
     &  &2    & 3   &4    &5    &6    &2  &3  &4  &5  &6  \\ \cline{2-12}
\multirow{9}{*}{\rotatebox[origin=c]{90}{Number of Constraints $(r)$}}
&5   
&$1.40\cdot 10^{-3}$ 
&$6.60\cdot 10^{-4}$ 
&$5.70\cdot 10^{-4}$ 
&$4.70\cdot 10^{-4}$ 
&$5.90\cdot 10^{-4}$  
&$3.16\cdot 10^{-1}$ 
&$2.03\cdot 10^{-1}$ 
&$1.81\cdot 10^{-1}$ 
&$1.71\cdot 10^{-1}$ 
&$1.82\cdot 10^{-1}$ \\

&10   
&$4.40\cdot 10^{-4}$ 
&$7.80\cdot 10^{-4}$ 
&$1.08\cdot 10^{-3}$ 
&$1.17\cdot 10^{-3}$ 
&$1.32\cdot 10^{-3}$ 
&$2.25\cdot 10^{-1}$ 
&$9.78\cdot 10^{-2}$ 
&$4.79\cdot 10^{-2}$ 
&$3.26\cdot 10^{-2}$ 
&$2.65\cdot 10^{-2}$ \\

&15  
&$4.00\cdot 10^{-4}$ 
&$8.90\cdot 10^{-4}$ 
&$1.62\cdot 10^{-3}$ 
&$1.89\cdot 10^{-3}$ 
&$2.30\cdot 10^{-3}$ 
&$1.60\cdot 10^{-1}$ 
&$6.22\cdot 10^{-2}$ 
&$3.23\cdot 10^{-2}$ 
&$1.43\cdot 10^{-2}$ 
&$8.47\cdot 10^{-3}$ \\

&20   
&$3.20\cdot 10^{-4}$ 
&$8.90\cdot 10^{-4}$ 
&$1.81\cdot 10^{-3}$ 
&$2.28\cdot 10^{-3}$ 
&$6.12\cdot 10^{-3}$ 
&$1.66\cdot 10^{-1}$ 
&$5.29\cdot 10^{-2}$ 
&$1.97\cdot 10^{-2}$ 
&$6.91\cdot 10^{-3}$ 
&$3.64\cdot 10^{-3}$ \\

&25   
&$4.50\cdot 10^{-4}$ 
&$1.26\cdot 10^{-3}$ 
&$4.13\cdot 10^{-3}$ 
&$7.41\cdot 10^{-3}$ 
&$1.49\cdot 10^{-2}$ 
&$1.72\cdot 10^{-1}$ 
&$4.54\cdot 10^{-2}$ 
&$1.66\cdot 10^{-2}$ 
&$5.67\cdot 10^{-3}$ 
&$2.16\cdot 10^{-3}$ \\

&30   
&$4.80\cdot 10^{-4}$ 
&$1.84\cdot 10^{-3}$ 
&$5.42\cdot 10^{-3}$ 
&$1.70\cdot 10^{-2}$ 
&$3.52\cdot 10^{-2}$ 
&$1.61\cdot 10^{-1}$ 
&$4.76\cdot 10^{-2}$ 
&$1.32\cdot 10^{-2}$ 
&$4.60\cdot 10^{-3}$ 
&$1.12\cdot 10^{-3}$ \\

&35   
&$4.90\cdot 10^{-4}$ 
&$2.62\cdot 10^{-3}$ 
&$9.10\cdot 10^{-3}$ 
&$3.06\cdot 10^{-2}$ 
&$3.31\cdot 10^{-1}$ 
&$1.54\cdot 10^{-1}$ 
&$4.30\cdot 10^{-2}$ 
&$1.26\cdot 10^{-2}$ 
&$3.45\cdot 10^{-3}$ 
&$1.10\cdot 10^{-3}$ \\

&40   
&$1.37\cdot 10^{-3}$ 
&$8.64\cdot 10^{-3}$ 
&$2.86\cdot 10^{-2}$ 
&$1.85\cdot 10^{-1}$ 
&$8.49\cdot 10^{-1}$ 
&$1.36\cdot 10^{-1}$ 
&$3.65\cdot 10^{-2}$ 
&$1.12\cdot 10^{-2}$ 
&$3.28\cdot 10^{-3}$ 
&$8.41\cdot 10^{-4}$ \\

&45   
&$1.08\cdot 10^{-3}$ 
&$1.29\cdot 10^{-2}$ 
&$3.09\cdot 10^{-2}$ 
&$4.00\cdot 10^{-1}$ 
&$3.29\cdot 10^{0}$ 
&$1.33\cdot 10^{-1}$ 
&$3.54\cdot 10^{-2}$ 
&$9.61\cdot 10^{-3}$ 
&$2.78\cdot 10^{-3}$ 
&$7.14\cdot 10^{-4}$ \\

&50   
&$2.04\cdot 10^{-3}$ 
&$2.04\cdot 10^{-2}$ 
&$5.49\cdot 10^{-2}$ 
&$7.78\cdot 10^{-1}$ 
&$5.99\cdot 10^{0}$ 
&$1.40\cdot 10^{-1}$ 
&$3.49\cdot 10^{-2}$ 
&$1.00\cdot 10^{-3}$ 
&$2.36\cdot 10^{-3}$ 
&$6.83\cdot 10^{-4}$\\

\end{tabular}
}
}
\end{table}





\section{Conclusion}
\label{sec::conclusion}

We set out to find a closed-form optimization algorithm for a convex function subject to linear-inequality constraints. Unlike many existing methods, the method we found requires no feasible space, is highly parallelizable, and effective on non-convex polyhedra.  Our algorithm does require a black-box method capable of finding the minimum point over an arbitrary affine space, or barring that, the Hilbert space.  When either the number of constraints, or the number of dimensions is low, the product of the time and processor complexities is polynomial times the complexity of the black-box method.

Going forward, we hope to improve on the current method by developing a heuristic greedy approach to choosing the next affine candidate, as apposed to the current method of ordering exclusively by co-dimension. Additionally, building on the primary result of Table \ref{table: fraction}: as the number of affine spaces increases, the fraction of them over which we need to call the black-box method seems to approach 0.  This should be rigorously proved.


\bibliographystyle{plainurl} 

\bibliography{bib}

\end{document}